\theoremstyle{plain}
\newtheorem{thm}{Theorem}[section]
\newtheorem{cor}[thm]{Corollary}
\newtheorem{prop}[thm]{Proposition}
\newtheorem{definition}[thm]{Definition}
\newtheorem{rem}[thm]{Remark}
\newtheorem{exa}[thm]{Example}
\def\R{{\mathbb R}}
\def\rmoveiod#1#2{
\setlength{\unitlength}{#1}
\begin{picture}(50,30)
\put(5,0){\line(0,1){30}}

{\allinethickness{.8pt}
\put(10,15){\vector(1,0){15}}
\put(25,15){\vector(-1,0){15}}}

\qbezier(30,0)(30,20)(45,20)
\qbezier(45,20)(50,20)(50,15)
\qbezier(50,15)(50,10)(45,10)
\qbezier(45,10)(40,10)(36,14)
\qbezier(36,14)(30,24)(30,30)

\ifnum#2=2
\put(3,28){\path(0,0)(2,2)(4,0)}
\put(28,28){\path(0,0)(2,2)(4,0)}
\put(0,0){\makebox{${\Huge \overline{x}}$}}
\put(7,0){\makebox{${\Huge \underline{x}}$}}
\put(25,0){\makebox{${\Huge \overline{x}}$}}
\put(32,0){\makebox{$\Huge \underline{x}$}}
\put(45,6){\makebox{${ \overline{y}}$}}
\put(45,13){\makebox{${\Huge \underline{y}}$}}
\put(23,26){\makebox{$\Huge \underline{y}^{\overline{x}}$}}
\put(33,26){\makebox{$\Huge \overline{y}_{\underline{x}}$}}
\put(38,15){\makebox{${ \overline{x}_{\underline{y}}}$}}
\put(38,21){\makebox{${\Huge \underline{x}^{\overline{y}}}$}}
\fi

\end{picture}
}
\def\rmoveiiod#1#2{
\setlength{\unitlength}{#1}
\begin{picture}(80,40)
\put(5,0){\line(0,1){35}}
\put(15,0){\line(0,1){35}}

{\allinethickness{.8pt}
\put(20,17){\vector(1,0){20}}
\put(40,17){\vector(-1,0){20}}}

\qbezier(50,0)(90,17.5)(50,35)
\qbezier(70,0)(30,17.5)(70,35)

\ifnum#2=2
\put(2,32){\path(0,0)(3,3)(6,0)}
\put(12,2){\path(0,0)(3,-3)(6,0)}
\put(50,32){\path(0,0)(0,3)(3,3)}
\put(70,3){\path(0,0)(0,-3)(-3,-3)}
\put(0,0){\makebox{$\overline{x}$}}
\put(7,0){\makebox{$\underline{x}$}}
\put(8,35){\makebox{$\overline{y}_{\underline{x}}$}}
\put(17,35){\makebox{$\underline{y}^{\overline{x}}$}}

\put(46,-2){\makebox{$\overline{x}$}}
\put(55,-1){\makebox{$\underline{x}$}}
\put(61,12){\makebox{$\underline{x}^{\overline{z}}$}}
\put(71,12){\makebox{$\overline{x}_{\underline{z}}$}}
\put(40,38){\makebox{$\overline{x}_{\underline{z}}^{\phantom{y}\underline{y}^{\overline{x}}}$}}
\put(51,38){\makebox{$\underline{x}^{\overline{z}}_{\phantom{y}\overline{y}_{\underline{x}}}$}}

\put(64,38){\makebox{$\overline{y}_{\underline{x}}$}}
\put(73,38){\makebox{$\underline{y}^{\overline{x}}$}}

\put(38,18){\makebox{$\underline{y}^{\overline{x}}_{\phantom{x}\overline{x}_{\underline{z}}}$}}
\put(54,18){\makebox{$\overline{y}_{\underline{x}}^{\phantom{x}\underline{x}^{\overline{z}}}$}}
\put(56,7){\makebox{$\overline{z}$}}
\put(49,7){\makebox{$\underline{z}$}}
\put(61,-3){\makebox{$\overline{z}_{\underline{x}}$}}
\put(71,-3){\makebox{$\underline{z}^{\overline{x}}$}}


\fi

\ifnum#2=3
\put(2,3){\path(0,0)(3,-3)(6,0)}
\put(12,32){\path(0,0)(3,3)(6,0)}
\put(50,3){\path(0,0)(0,-3)(3,-3)}
\put(70,32){\path(0,0)(0,3)(-3,3)}

\put(0,35){\makebox{$\underline{x}$}}
\put(7,35){\makebox{$\overline{x}$}}
\put(8,0){\makebox{$\underline{y}^{\overline{x}}$}}
\put(17,0){\makebox{$\overline{y}_{\underline{x}}$}}

\put(46,35){\makebox{$\underline{x}$}}
\put(55,35){\makebox{$\overline{x}$}}
\put(62,18){\makebox{$\overline{x}_{\underline{z}}$}}
\put(72,18){\makebox{$\underline{x}^{\overline{z}}$}}
\put(38,-2){\makebox{$\underline{x}^{\overline{z}}_{\phantom{y}\overline{y}_{\underline{x}}}$}}
\put(51,-2){\makebox{$\overline{x}_{\underline{z}}^{\phantom{y}\underline{y}^{\overline{x}}}$}}
\put(63,36){\makebox{$\underline{z}^{\overline{x}}$}}
\put(73,36){\makebox{$\overline{z}_{\underline{x}}$}}

\put(50,25){\makebox{$\overline{z}$}}
\put(58,25){\makebox{$\underline{z}$}}
\put(54,12){\makebox{$\underline{y}^{\overline{x}}_{\phantom{x}\overline{x}_{\underline{z}}}$}}
\put(37,12){\makebox{$\overline{y}_{\underline{x}}^{\phantom{x}\underline{x}^{\overline{z}}}$}}
\put(62,-5){\makebox{$\underline{y}^{\overline{x}}$}}
\put(72,0){\makebox{$\overline{y}_{\underline{x}}$}}

\fi

\end{picture}
}
\def\rmoveiiiod#1#2{
\setlength{\unitlength}{#1}
\begin{picture}(75,35)
\put(5,0){\line(1,1){15}}
\qbezier(20,15)(25,20)(25,30)

\put(15,0){\line(-1,1){7}}
\qbezier(9,6)(0,15)(10,25)
\put(10,25){\line(1,1){5}}

\qbezier(25,0)(25,10)(21,14)
\put(21,14){\line(-1,1){16}}

{\allinethickness{.8pt}
\put(32,15){\vector(1,0){15}}
\put(47,15){\vector(-1,0){15}}}

\qbezier(55,0)(55,10)(60,15)
\put(60,15){\line(1,1){15}}

\put(65,0){\line(1,1){5}}
\qbezier(70,5)(80,15)(71,24)
\put(71,24){\line(-1,1){6}}

\put(75,0){\line(-1,1){16}}
\qbezier(59,16)(55,20)(55,30)

\ifnum#2=2
\put(5,27){\path(0,0)(0,3)(3,3)}
\put(12,30){\path(0,0)(3,0)(3,-3)}
\put(22,27){\path(0,0)(3,3)(6,0)}

\put(1,-2){\makebox{$\overline{x}$}}
\put(5,-2){\makebox{$\underline{x}$}}
\put(11,-2){\makebox{$\overline{y}$}}
\put(16,-2){\makebox{$\underline{y}$}}
\put(22,-2){\makebox{$\overline{z}$}}
\put(25,-2){\makebox{$\underline{z}$}}

\put(52,27){\path(0,0)(3,3)(6,0)}
\put(65,27){\path(0,0)(0,3)(3,3)}
\put(72,30){\path(0,0)(3,0)(3,-3)}

\fi

\end{picture}
}
\def\rmovevio#1#2{
\setlength{\unitlength}{#1}
\begin{picture}(50,35)
\put(5,5){\line(0,1){30}}

{\allinethickness{.8pt}
\put(10,20){\vector(1,0){15}}
\put(25,20){\vector(-1,0){15}}}

\qbezier(30,5)(30,25)(45,25)
\qbezier(45,25)(50,25)(50,20)
\qbezier(50,20)(50,15)(45,15)
\qbezier(45,15)(30,15)(30,35)

\put(34,20){\circle{5}}

\ifnum#2=2
\put(3,32){\path(0,0)(2,2)(4,0)}
\put(28,32){\path(0,0)(2,2)(4,0)}
\put(38,20){\makebox{${\Huge c_{1}}$}}
\fi

\end{picture}
}
\def\rmoveviio#1#2{
\setlength{\unitlength}{#1}
\begin{picture}(70,40)
\put(5,5){\line(0,1){35}}
\put(15,5){\line(0,1){35}}

{\allinethickness{.8pt}
\put(20,20){\vector(1,0){20}}
\put(40,20){\vector(-1,0){20}}}

\qbezier(47,5)(87,22.5)(47,40)
\qbezier(67,5)(27,22.5)(67,40)
\put(57,10){\circle{5}}
\put(57,35){\circle{5}}

\ifnum#2=2
\put(2,37){\path(0,0)(3,3)(6,0)}
\put(12,37){\path(0,0)(3,3)(6,0)}
\put(47,37){\path(0,0)(0,3)(3,3)}
\put(67,37){\path(0,0)(0,3)(-3,3)}
\put(52,25){\makebox{${\Huge c_{1}}$}}
\put(52,13){\makebox{${\Huge c_{2}}$}}
\fi

\ifnum#2=3
\put(2,7){\path(0,0)(3,-3)(6,0)}
\put(12,37){\path(0,0)(3,3)(6,0)}
\put(47,8){\path(0,0)(0,-3)(3,-3)}
\put(67,37){\path(0,0)(0,3)(-3,3)}
\put(54,25){\makebox{${\Huge c_{1}}$}}
\put(54,13){\makebox{${\Huge c_{2}}$}}
\fi

\end{picture}
}
\def\rmoveviiio#1#2{
\setlength{\unitlength}{#1}
\begin{picture}(70,35)
\put(0,5){\line(1,1){15}}
\qbezier(15,20)(20,25)(20,35)

\put(10,5){\line(-1,1){5}}
\qbezier(5,10)(-5,20)(5,30)
\put(5,30){\line(1,1){5}}

\qbezier(20,5)(20,15)(15,20)
\put(15,20){\line(-1,1){15}}

\put(5,10){\circle{5}}
\put(15,20){\circle{5}}
\put(5,30){\circle{5}}

{\allinethickness{.8pt}
\put(27,20){\vector(1,0){15}}
\put(42,20){\vector(-1,0){15}}}

\qbezier(50,5)(50,15)(55,20)
\put(55,20){\line(1,1){15}}

\put(60,5){\line(1,1){5}}
\qbezier(65,10)(75,20)(65,30)
\put(65,30){\line(-1,1){5}}

\put(70,5){\line(-1,1){15}}
\qbezier(55,20)(50,25)(50,35)

\put(65,10){\circle{5}}
\put(55,20){\circle{5}}
\put(65,30){\circle{5}}

\ifnum#2=2
\put(0,32){\path(0,0)(0,3)(3,3)}
\put(7,35){\path(0,0)(3,0)(3,-3)}
\put(17,32){\path(0,0)(3,3)(6,0)}

\put(20,18){\makebox{${\Huge c_{1}}$}}

\put(47,32){\path(0,0)(3,3)(6,0)}
\put(60,32){\path(0,0)(0,3)(3,3)}
\put(67,35){\path(0,0)(3,0)(3,-3)}

\put(60,18){\makebox{${\Huge c'_{1}}$}}
\fi

\end{picture}
}
\def\rmovevivod#1#2{
\setlength{\unitlength}{#1}
\begin{picture}(75,40)
\put(5,5){\line(1,1){15}}
\qbezier(20,20)(25,25)(25,35)

\put(15,5){\line(-1,1){7}}
\qbezier(9,11)(0,20)(10,30)
\put(10,30){\line(1,1){5}}

\qbezier(25,5)(25,15)(21,19)
\put(21,19){\line(-1,1){16}}

\put(10,10){\circle{5}}
\put(10,30){\circle{5}}

{\allinethickness{.8pt}
\put(33,20){\vector(1,0){15}}
\put(48,20){\vector(-1,0){15}}}

\qbezier(55,5)(55,15)(60,20)
\put(60,20){\line(1,1){15}}

\put(65,5){\line(1,1){5}}
\qbezier(70,10)(80,20)(71,29)
\put(71,29){\line(-1,1){6}}

\put(75,5){\line(-1,1){16}}
\qbezier(59,21)(55,25)(55,35)

\put(70,10){\circle{5}}
\put(70,30){\circle{5}}

\ifnum#2=2
\put(5,7){\path(0,0)(0,-2)(2,-2)}
\put(13,5){\path(0,0)(2,0)(2,2)}
\put(23,7){\path(0,0)(2,-2)(4,0)}

\put(0,3){\makebox{$\overline{z}_{\underline{x}}$}}
\put(7,3){\makebox{$\underline{z}^{\overline{x}}$}}
\put(19,3){\makebox{$\overline{x}_{\underline{z}}$}}
\put(27,3){\makebox{$\underline{x}^{\overline{z}}$}}

\put(21,36){\makebox{$\underline{z}$}}
\put(26,36){\makebox{$\overline{z}$}}
\put(11,36){\makebox{$\underline{y}$}}
\put(16,36){\makebox{$\overline{y}$}}
\put(1,36){\makebox{$\underline{x}$}}
\put(6,36){\makebox{$\overline{x}$}}

\put(53,7){\path(0,0)(2,-2)(4,0)}
\put(65,7){\path(0,0)(0,-2)(2,-2)}
\put(73,5){\path(0,0)(2,0)(2,2)}

\put(50,3){\makebox{$\overline{z}_{\underline{x}}$}}
\put(56,3){\makebox{$\underline{z}^{\overline{x}}$}}
\put(70,3){\makebox{$\overline{x}_{\underline{z}}$}}
\put(76,3){\makebox{$\underline{x}^{\overline{z}}$}}

\put(51,36){\makebox{$\underline{x}$}}
\put(56,36){\makebox{$\overline{x}$}}
\put(62,36){\makebox{$\underline{y}$}}
\put(67,36){\makebox{$\overline{y}$}}
\put(72,36){\makebox{$\underline{z}$}}
\put(76,36){\makebox{$\overline{z}$}}

\fi
\end{picture}
}
\def\rmovevivodold#1#2{
\setlength{\unitlength}{#1}
\begin{picture}(70,30)
\put(0,0){\line(1,1){15}}
\qbezier(15,15)(20,20)(20,30)

\put(10,0){\line(-1,1){5}}
\qbezier(5,5)(-5,15)(5,25)
\put(5,25){\line(1,1){5}}

\qbezier(20,0)(20,10)(16,14)
\put(16,14){\line(-1,1){16}}

\put(5,5){\circle{5}}
\put(5,25){\circle{5}}

{\allinethickness{.8pt}
\put(30,15){\vector(1,0){10}}
\put(40,15){\vector(-1,0){10}}}

\qbezier(50,0)(50,10)(55,15)
\put(55,15){\line(1,1){15}}

\put(60,0){\line(1,1){5}}
\qbezier(65,5)(75,15)(65,25)
\put(65,25){\line(-1,1){5}}

\put(70,0){\line(-1,1){16}}
\qbezier(54,16)(50,20)(50,30)

\put(65,5){\circle{5}}
\put(65,25){\circle{5}}

\ifnum#2=2
\put(0,27){\path(0,0)(0,3)(3,3)}
\put(7,30){\path(0,0)(3,0)(3,-3)}
\put(17,27){\path(0,0)(3,3)(6,0)}

\put(20,13){\makebox{${\Huge c_{1}}$}}

\put(47,27){\path(0,0)(3,3)(6,0)}
\put(60,27){\path(0,0)(0,3)(3,3)}
\put(67,30){\path(0,0)(3,0)(3,-3)}

\put(60,13){\makebox{${\Huge c'_{1}}$}}
\fi

\end{picture}
}
\def\rmovetiiio#1#2{
\setlength{\unitlength}{#1}
\begin{picture}(70,20)

\put(0,0){\line(1,1){20}}
\put(20,0){\line(-1,1){9}}
\put(9,11){\line(-1,1){9}}

\put(7,3){\line(-1,1){4}}
\put(13,3){\line(1,1){4}}
\put(3,13){\line(1,1){4}}
\put(17,13){\line(-1,1){4}}

{\allinethickness{.8pt}
\put(25,10){\vector(1,0){10}}
\put(35,10){\vector(-1,0){10}}}

\qbezier(40,5)(48,20)(54,11)
\qbezier(56, 9)(62,0)(70,15)

\qbezier(40,15)(48,0)(55,10)
\qbezier(55,10)(62,20)(70,5)

\put(43,10){\circle{5}}
\put(67,10){\circle{5}}

\ifnum#2=2
\put(0,17){\path(0,0)(0,3)(3,3)}
\put(17,20){\path(0,0)(3,0)(3,-3)}

\put(15,8){\makebox{${\Huge c_{1}}$}}

\put(40,12){\path(0,0)(0,3)(3,3)}
\put(67,15){\path(0,0)(3,0)(3,-3)}

\put(53,3){\makebox{${\Huge c'_{1}}$}}
\fi

\end{picture}
}
\begin{document}

\title[Colorings and doubled colorings of virtual doodles]
{Colorings and doubled colorings of virtual doodles}

\author[A.~Bartholomew]{Andrew Bartholomew}
\author[R.~Fenn]{Roger Fenn}
\author[N.~Kamada]{Naoko Kamada}
\author[S.~Kamada]{Seiichi Kamada} 

\thanks{This work was supported by JSPS KAKENHI Grant Numbers JP15K04879 and JP26287013.}
\address[A.~Bartholomew]{ 
School of Mathematical Sciences, University of Sussex\\
Falmer, Brighton, BN1 9RH, England}
\email{andrewb@layer8.co.uk}
\address[R.~Fenn]{ School of Mathematical Sciences, University of Sussex\\
Falmer, Brighton, BN1 9RH, England}
\email{rogerf@sussex.ac.uk}
\address[N.~Kamada]{Graduate School of Natural Sciences,  Nagoya City University \\
1 Yamanohata, Mizuho-cho, Mizuho-ku, Nagoya, Aichi 467-8501 Japan}
\email{kamada@nsc.nagoya-cu.ac.jp}
\address[S.~Kamada]{Department of Mathematics, Osaka City University\\
Suimiyoshi,  Osaka 558-8585, Japan}
\email{skamada@sci.osaka-cu.ac.jp}

\date{\today}

\begin{abstract} 
A virtual doodle is an equivalence class of virtual diagrams under an equivalence relation generated by flat version of classical Reidemesiter moves and virtual Reidemsiter  moves such that Reidemeister moves of type 3 are forbidden.  
In this paper we discuss colorings of virtual diagrams using an algebra, called a doodle switch, and define an invariant of virtual doodles. Besides usual colorings of diagrams, we also introduce doubled colorings. 
\end{abstract}

\maketitle


\section{Introduction}

A {\it virtual diagram} is a generically immersed closed $1$-manifold in $\R^2$ such that some of the crossings are decorated by small circles.  Crossings are called {\it virtual crossings} when they are decorated by small circles; otherwise, they are called {\it real crossings}. 
The local moves, $R_1, R_2, R_3$, depicted in Figure~\ref{fgmoves} are flat version of classical Reidemeister moves and 
the moves, $VR_1, \dots, VR_4$, are flat version of virtual Reidemeister moves. 
We assume that virtual diagrams are oriented.  (Orientations are omitted in Figure~\ref{fgmoves}.) 
A {\it virtual doodle} is  
an equivalence class of virtual diagrams  
under an equivalence relation generated by  $R_1, R_2, VR_1, VR_2, VR_3$ and $VR_4$.

\begin{figure}[ht]
\centerline{
\begin{tabular}{ccc}
\rmoveiod{.4mm}{1}&\rmoveiiod{.35mm}{1} &\rmoveiiiod{.4mm}{1}\\
$R_1$ & $R_2$ & $R_3$\\
\end{tabular}}
\centerline{
\begin{tabular}{cccc}
\rmovevio{.4mm}{1}&\rmoveviio{.35mm}{1}&\rmoveviiio{.4mm}{1}&\rmovevivod{.4mm}{1}\\
$VR_1$ & $VR_2$ & $VR_3$ & $VR_4$ \\
\end{tabular}}
\caption{Moves}\label{fgmoves}
\end{figure}

The doodle was originally introduced by the second author and P.~Taylor in \cite{rFT} as a homotopy class of 
a collection of embedded circles in $S^2$ with no triple or higher intersection points. 
M.~Khovanov \cite{rMK} extended the idea to allow each component to be an immersed circle in $S^2$. 
It was further extended to doodles on surfaces in \cite{rBFKK}. The notion of a virtual doodle was introduced in \cite{rBFKK} and it was  shown that there is a natural bijection between the set of doodles on surfaces and the set of virtual doodles. 
For details and related topics on doodles, refer to \cite{rBFKK, rCarter91A, rCarter91B, rFT, rIT, rMK, rMant}.

We introduce an algebra, a {\it doodle switch}, which is a set  with a binary operation whose axioms correspond to 
$R_1$ and $R_2$ moves.  

For a virtual diagram $D$, the {\it fundamental doodle switch} $\mathrm{FDS}(D)$ is defined.  It is an invariant of $D$ as a virtual doodle.   
Let $T$ be a doodle switch.  
Colorings of a virtual diagram $D$ by using $T$ correspond to homomorphisms from 
$\mathrm{FDS}(D)$ to $T$.  
Thus the cardinality of colorings by $T$ is an invariant of a virtual doodle.  

For a virtual diagram $D$, we introduce a doodle switch called 
the {\it doubled fundamental doodle switch}, denoted by $\mathrm{DFDS}(D)$.  
It is also an invariant of $D$ as a virtual doodle.  
We introduce a {\it doubled coloring} of a virtual diagram $D$ by a doodle switch $T$. 
It corresponds to a homomorphism from $\mathrm{DFDS}(D)$ to $T$.  

In Section~\ref{sect:doublecover}, we introduced the notion of a {\it double covering} of a virtual diagram, which is inspired by a double covering of a virtual or twisted link diagram in \cite{rkn2, rkk7}.    
We show that if two virtual diagrams are equivalent as virtual doodles then double coverings of them are also equivalent as virtual doodles.  
It turns out that 
the doubled fundamental doodle switch of a virtual diagram $D$ is  
isomorphic to the fundamental doodle switch of a double covering $\widetilde{D}$.  

The paper is organized as follows: 
in Section~\ref{sect:doodleswitch} we define a doodle switch.  
The fundamental doodle switch and a coloring of a virtual diagram are introduced. 
In Section~\ref{sect:doubledcoloring} we define  the doubled fundamental doodle switch and 
a doubled coloring of a virtual diagram.  
In Section~\ref{sect:doublecover} we introduce a double covering $\widetilde{D}$ of a virtual diagram $D$, and   
prove that if $D$ and $D'$ are equivalent as virtual doodles then 
so are  $\widetilde{D}$ and $\widetilde{D'}$.  It is proved that the doubled fundamental doodle switch of a virtual diagram $D$ is  
isomorphic to the fundamental doodle switch of a double covering $\widetilde{D}$.

\section{Doodle switches}\label{sect:doodleswitch} 

\begin{definition}{\rm 
A {\it doodle switch} 
is a set $X$ with a binary operation $(x,y)\mapsto x \cdot y$ 
such that, for all $a, b \in X$, the following conditions are satisfied: 
\begin{itemize}
\item[(1)]
$a \cdot b = b \cdot a$ if and only if $a=b$; 
\item[(2)]
there is a unique element $u$ with $a = u \cdot b$; 
let $S: X^2 \to X^2$ be a map defined by $S(a,b) = (b \cdot a, a \cdot b)$.  Then 
$S$ is bijective. 
\end{itemize}
}\end{definition}

%
%
%
%

We call the conditions (1)--(2) {\it Axioms} of a doodle switch.  
A doodle switch $(X, \cdot)$ is denoted by $X$ when it causes no confusion.  It is often more convenient and preferred to denote $x \cdot y$ by $x^y$.  (The symbol $x^y$ is used in \cite{rFR, rFRS} for racks.)
Let $X$ be a doodle switch.
Let $\tau: X^2 \to X^2, (x,y)\mapsto (y,x)$, for $x,y \in X$, be the transposition. By definition of $S$, we have $S \circ \tau = \tau \circ S$.  

\begin{prop}
Let $X$ be a doodle switch. 
\begin{itemize}
\item[(1)] 
There exists a unique binary operation 
$\bullet: X^2 \to X$, $(x,y)\mapsto x \bullet y$ such that $S^{-1}(x, y) = (y \bullet x, x \bullet y)$ for all $x,y \in X$, i.e.,  $S(a,b)=(c,d)$ if and only if $(a,b) = (d \bullet c, c \bullet d)$. 
\item[(2)] 
There exists a unique binary operation $\cdot^{-1} : X^2 \to X, (x,y)\mapsto x \cdot^{-1} y$  such that 
$(x \cdot y) \cdot^{-1} y = (x \cdot^{-1} y) \cdot y = x$ for all $x, y \in X$. 
\item[(3)]  
There exists a unique binary operation 
$\bullet^{-1} : X^2 \to X, (x,y)\mapsto x \bullet^{-1} y$  such that 
$(x \bullet y) \bullet^{-1} y = (x \bullet^{-1} y) \bullet y = x$ for all $x, y \in X$. 

\end{itemize} 
\end{prop}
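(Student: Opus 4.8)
The plan is to handle the three parts in order, each time reducing to an elementary statement about bijections that Axiom~(2) supplies for free; the one genuinely non-formal point will appear in part~(3).

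First I would dispose of (1). Since $S$ is bijective by Axiom~(2), the map $S^{-1}$ exists, and the only thing to check is that it has the stated symmetric shape. I would conjugate the relation $S\circ\tau=\tau\circ S$ (noted just before the proposition) by $S^{-1}$ to obtain $\tau\circ S^{-1}=S^{-1}\circ\tau$. Writing $S^{-1}(x,y)=(f(x,y),g(x,y))$ for the two coordinate functions, this identity says exactly $f(x,y)=g(y,x)$ for all $x,y$; so setting $x\bullet y:=g(x,y)$ gives $S^{-1}(x,y)=(y\bullet x,x\bullet y)$, and uniqueness follows by comparing second coordinates with any competing operation.

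Next, part~(2). For fixed $b$, the right translation $R_b\colon x\mapsto x\cdot b$ is, by the first clause of Axiom~(2) (``there is a unique $u$ with $a=u\cdot b$''), a bijection of $X$. Then $x\cdot^{-1}y:=R_y^{-1}(x)$ is the only possibility, the two required identities $(x\cdot y)\cdot^{-1}y=x$ and $(x\cdot^{-1}y)\cdot y=x$ are just $R_y^{-1}\circ R_y=\mathrm{id}=R_y\circ R_y^{-1}$, and uniqueness comes from injectivity of $R_y$: any $g$ with $g(x,y)\cdot y=x$ satisfies $R_y(g(x,y))=x=R_y(x\cdot^{-1}y)$, hence $g=\cdot^{-1}$.

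Part~(3) is where I expect the actual work to be. The scheme mimics part~(2)---define $x\bullet^{-1}y:=(R'_y)^{-1}(x)$, where $R'_b\colon x\mapsto x\bullet b$, and copy the argument---but this presupposes that $R'_b$ is a bijection, which is \emph{not} a direct quotation of the axioms, since $\bullet$ is defined only implicitly through $S^{-1}$. To establish it I would unwind part~(1): $S^{-1}(x,b)=(b\bullet x,\,x\bullet b)$, so ``$x\bullet b=a$'' is equivalent to $S(t,a)=(x,b)$ with $t:=b\bullet x$, hence to the system $t\cdot a=b$ and $x=a\cdot t$. Axiom~(2) provides a unique $t$ with $t\cdot a=b$ (namely $t=b\cdot^{-1}a$), after which $x=a\cdot t$ is forced, and conversely this $x$ indeed satisfies $x\bullet b=a$ because $S(t,a)=(a\cdot t,\,t\cdot a)=(x,b)$. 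Thus $R'_b$ is bijective, with inverse $a\mapsto a\cdot(b\cdot^{-1}a)$, and the remainder goes through verbatim as in~(2). The main obstacle is really just keeping the bookkeeping in this last equivalence straight---which argument of $S$, $S^{-1}$, and Axiom~(2) plays which role---but no genuine computation is involved.
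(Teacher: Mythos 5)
Your argument is correct and follows essentially the same route as the paper: part (1) via $S^{-1}\circ\tau=\tau\circ S^{-1}$, part (2) via the unique solvability of $u\cdot y=x$ from Axiom (2), and part (3) by translating $x\bullet b=a$ through $S(t,a)=(x,b)$ into the system $t\cdot a=b$, $x=a\cdot t$, which yields exactly the paper's formula $x\bullet^{-1}y=x\cdot(y\cdot^{-1}x)$. The only cosmetic difference is that you \emph{derive} this formula by proving right $\bullet$-translation is a bijection and inverting it, whereas the paper states the formula outright and verifies the two identities directly; both are complete.
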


\begin{proof}
(1) Define binary operations 
$\bullet_1$ and $\bullet_2$ by  
$S^{-1}(x, y) = (y \bullet_1 x, x \bullet_2 y)$.  
Note that $S^{-1} \circ \tau = \tau \circ S^{-1}$, which implies 
$\bullet_1 = \bullet_2$.  Let $\bullet = \bullet_1 = \bullet_2$.  
If $\ast$ is a binary operation with $S^{-1}(x, y) = (y \ast x, x \ast y)$ then 
$y \ast x = y \bullet x$.  Thus $\ast = \bullet$. 

(2) For $x, y \in X$, by Axiom (2), there is a unique element $u$ with $u \cdot y=x$. Let $x \cdot^{-1} y := u$.  
Then $(x \cdot^{-1} y) \cdot y = u \cdot y =x$.   Put $v := (x \cdot y) \cdot^{-1} y$.  Then $v \cdot y = x \cdot y$. By Axiom (2), we have $v=x$.   If $\ast$ is a binary operation with 
$(x \ast y) \cdot y = x$, then $(x \ast y) \cdot y = x = (x \cdot^{-1} y) \cdot y$. 
By Axiom (2), we have $x \ast y = x \cdot^{-1} y$.  Thus $\ast = \cdot^{-1}$.  

(3) Define a binary operation $\bullet^{-1}$ by 
 $x \bullet^{-1} y = x \cdot (y \cdot^{-1} x)$ for all $x, y \in X$.   

 First we show that $(x \bullet y) \bullet^{-1} y = x$.  For $x, y \in X$, let $z, w \in X$ with $S(z,w)=(x,y)$, i.e., 
 $x = w \cdot z$, $y = z \cdot w$, $z = y \bullet x$ and $w = x \bullet y$.   
 Then $(x \bullet y) \bullet^{-1} y = w \bullet^{-1} y = w \cdot (y \cdot^{-1} w) = w \cdot z = x$.  
 
We show that $(x \bullet^{-1} y) \bullet y = x$.   For $x, y \in X$, let $u, v\in X$ with 
$u = y \cdot^{-1}x$ and $v = x \cdot u = x \cdot (y \cdot^{-1} x) = x \bullet^{-1} y$.  Then $S(u,x) =(v,y)$.  
We have $(x \bullet^{-1} y) \bullet y = v \bullet y =x$.  

If $\ast$ is a binary operation with $(x \ast y) \bullet y = x$, then 
$((x \ast y) \bullet y) \bullet^{-1} y = x \bullet^{-1} y$ and we have $x \ast y = x \bullet^{-1} y$.  Thus $\ast= \bullet^{-1}$.  
\end{proof}

Let $t: X \to X$ be a unary operation with $t(x) = x \cdot x$ for $x \in X$. 
By Axioms (1) and (2), we see that $t$ is a bijection and 
$S(a, a ) = (t(a), t(a))$ and $S^{-1}(b,b)= (t^{-1}(b), t^{-1}(b))$ for $a,b \in X$. Moreover, 
$t^{-1}(x) = x \bullet x$ for all $x \in X$. 

In what follows, when we discuss a doodle switch we assume that the operations 
$\cdot^{-1}$, $\bullet$, $\bullet^{-1}$ and $t$ 
are automatically equipped with.

For a virtual diagram $D$,  removing an open regular neighbourhood of every real crossing, we obtain a collection of oriented immersed arcs and loops in $\R^2$, whose crossings are virtual crossings of $D$.  Such an arc or loop is called a {\it semiarc} of $D$.  

The {\it fundamental doodle switch} of $D$, 
denoted by $\mathrm{FDS}(D)$,  
is the doodle switch generated by letters corresponding to the semiarcs of $D$ and the defining relations are coming from each real crossing as follows: 
Let $a, b, c, d$ be the letters corresponding to the $4$ semiarcs around a real crossing as in Figure~\ref{fgdswitch} (Left). 
The defining relations associated to the crossing are $c = b \cdot a$ and $d = a \cdot b$. 
(At a virtual crossing, two semiarcs intersect as in Figure~\ref{fgdswitch} (Right).)


\begin{figure}[ht]
\vspace{1mm}
\includegraphics[width=12cm]{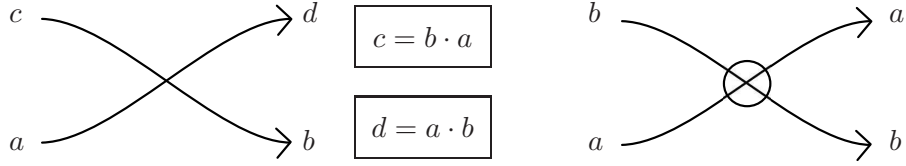}
\setlength{\unitlength}{1mm}
\begin{picture}(120,1)(0,0)
\put(0,24){$c$}   \put(39,24){$d$}  
\put(0,7){$a$}       \put(39,7){$b$}  

\put(46,18){\framebox(18,8){$c= b \cdot a$}}
\put(46,6){\framebox(18,8){$d= a \cdot b$}} 

\put(77,24){$b$}   \put(117,24){$a$} 
\put(77,7){$a$}   \put(117,7){$b$}  
\end{picture}
\caption{The defining relations associated to a crossing}\label{fgdswitch}
\end{figure}

Note that the defining relations associated to the crossing depicted in Figure~\ref{fgdswitch} (Left) imply relations  
$a = d \bullet c$ and $b= c \bullet d$, and  relations  
$a = d \cdot^{-1} b$, 
$b = c \cdot^{-1} a$, 
$c = b \bullet^{-1} d$ and 
$d = a \bullet^{-1} c$.  See Figure~\ref{fgdswitchmore}. 

\begin{figure}[ht]
\vspace{15mm}
\includegraphics[width=3.2cm, height=1.8cm]{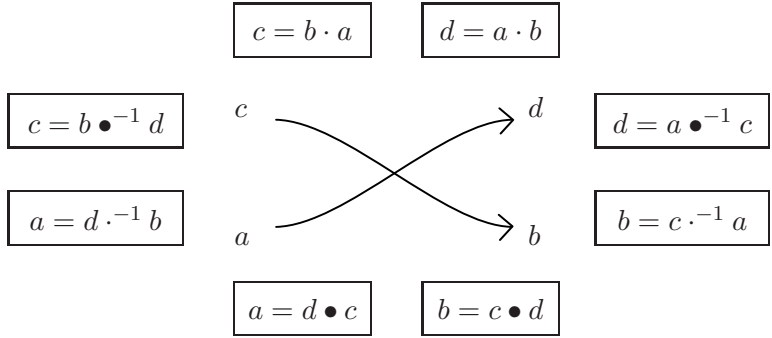}
\vspace{8mm}
\setlength{\unitlength}{1mm}
\begin{picture}(120,1)(-38,0)
\put(0,22){$c$}   \put(39,22){$d$}  
\put(0,5){$a$}       \put(39,5){$b$}  

\put(0,30){\framebox(18,7){$c= b \cdot a$}}
\put(25,30){\framebox(18,7){$d= a \cdot b$}} 

\put(0,-7){\framebox(18,7){$a= d \bullet c$}}
\put(25,-7){\framebox(18,7){$b = c \bullet d$}} 

\put(-30,18){\framebox(23,7){$c= b \bullet^{-1} d$}}
\put(-30,5){\framebox(23,7){$a= d \cdot^{-1} b$}} 

\put(48,18){\framebox(23,7){$d= a \bullet^{-1} c$}}
\put(48,5){\framebox(23,7){$b= c \cdot^{-1} a$}} 

\end{picture}
\caption{Relations around a crossing}\label{fgdswitchmore}
\end{figure}

\begin{thm}\label{thm:fundamentaldswitch}
Let $D$ and $D'$ be virtual diagrams. 
If $D$ and $D'$ are equivalent as virtual doodles, then 
$\mathrm{FDS}(D)$ and $\mathrm{FDS}(D')$  are isomorphic as virtual doodles. 
\end{thm}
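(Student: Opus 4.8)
\emph{Proof strategy.} The plan is a move-by-move verification. Virtual-doodle equivalence is, by definition, generated by the six local moves $R_1$, $R_2$, $VR_1$, $VR_2$, $VR_3$, $VR_4$, so it is enough to prove that if $D'$ is obtained from $D$ by a single application of one of these moves then $\mathrm{FDS}(D)\cong\mathrm{FDS}(D')$. Such a move is supported in a disk $B\subset\R^2$, and $D$ and $D'$ coincide on $\R^2\setminus B$. The semiarcs of $D$ meeting $\R^2\setminus B$ correspond bijectively to those of $D'$, and the defining relations contributed by real crossings lying outside $B$ are literally the same for the two diagrams. Hence the whole problem localises: one has to compare the generators coming from semiarcs inside $B$ (together with the way the ``boundary'' semiarcs are subdivided by $\partial B$) and the relations coming from the real crossings inside $B$, and to exhibit a chain of Tietze transformations between the two resulting local presentations.

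For $VR_1$, $VR_2$, $VR_3$ there is nothing to prove: the disk $B$ contains no real crossing at all, a semiarc is by definition an immersed arc or loop all of whose self- and mutual intersections are virtual, and these three moves merely re-route such an arc through a different virtual-crossing pattern. Thus $D$ and $D'$ have literally the same semiarcs and the same family of defining relations, so $\mathrm{FDS}(D)=\mathrm{FDS}(D')$. The move $VR_4$ involves a single real crossing together with some virtual crossings (it slides a strand past a real crossing virtually); identifying the intersection points with $\partial B$ across the move merely relabels the four semiarcs around the real crossing, hence relabels the two relations $c=b\cdot a$ and $d=a\cdot b$ accordingly, so again $\mathrm{FDS}(D)\cong\mathrm{FDS}(D')$.

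The content lies in $R_1$ and $R_2$, and here the Axioms of a doodle switch --- which, as noted in the text, were designed to correspond to precisely these two moves --- together with the derived operations $\cdot^{-1}$, $\bullet$, $\bullet^{-1}$ and $t$ of the preceding Proposition are exactly what make the argument work. For $R_2$, the disk of $D'$ contains two real crossings and that of $D$ contains none; writing $p,q$ for the generators of the two semiarcs entering $B$, and $p',q'$ and $p'',q''$ for the generators cut off by the first and the second crossing respectively, the four crossing relations are, by the bijectivity of $S$ asserted in Axiom~(2) and the formula $S^{-1}(x,y)=(y\bullet x,\,x\bullet y)$ of the Proposition, together equivalent to $p''=p$, $q''=q$ and explicit expressions for $p',q'$ as words in $p,q$ (the orientations are such that the two crossings record mutually inverse operations, the flat analogue of the positive/negative crossing pair in an ordinary Reidemeister~II move). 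Eliminating $p',q'$ and identifying $p''$ with $p$ and $q''$ with $q$ then produces the isomorphism onto $\mathrm{FDS}(D')$. For $R_1$, the disk of $D$ contains one real crossing and a loop-shaped semiarc; denoting the incoming generator by $p$, the outgoing one by $q$, and the loop by $r$, the two relations at the kink crossing are --- using Axiom~(1) together with the invertibility statements of Axiom~(2) and the Proposition (equivalently, the bijectivity of $t$ and of each map $x\mapsto x\cdot y$) --- equivalent to $q=p$ together with a presentation of $r$; the two (mirror-symmetric) kinds of kink are treated in the same way, one governed by $\cdot$ and $t$, the other by $\bullet$ and $t^{-1}$. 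Eliminating $r$ and identifying $q$ with $p$ yields $\mathrm{FDS}(D)\cong\mathrm{FDS}(D')$.

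The main obstacle is bookkeeping rather than anything conceptual. Each of $R_1$, $R_2$, $VR_4$ occurs in several variants --- according to the orientations of the strands, and for $R_1$ also according to the sign of the kink --- and in every variant one must check carefully that the crossing relations really do collapse, after the appropriate applications of $\cdot^{-1}$, $\bullet$, $\bullet^{-1}$, $t$ provided by the Proposition, to the bare identification of the boundary generators. It is worth remarking that no Yang--Baxter-type identity is imposed on a doodle switch, in keeping with $R_3$ being forbidden for virtual doodles: the argument above does not --- and should not --- go through for $R_3$.
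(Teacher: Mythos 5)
Your proposal is correct and follows essentially the same route as the paper: a move-by-move check in which the virtual moves leave the presentation unchanged (up to relabelling for $VR_4$), and the $R_1$ and $R_2$ cases are handled by showing, via Axiom (2) and the derived operations $\cdot^{-1}$, $\bullet$, $t$, that the local crossing relations collapse to identifications of the boundary generators, after which Tietze eliminations give the isomorphism. The paper simply carries out the explicit generator computations for the two $R_1$ variants and three $R_2$ variants that you describe in outline.
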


\begin{proof} 
If $D$ and $D'$ are related by an $R_1$ move as in Figure~\ref{fgdswitch1} (Left), then $S(y,y)=(x,z)$. 
This implies that $x=z$ and $y = t^{-1}(x)$.  Replacing $z$ with $x$ and removing  
$y$ from the generating set of 
$\mathrm{FDS}(D')$, we obtain $\mathrm{FDS}(D)$. 
Thus $\mathrm{FDS}(D) \cong \mathrm{FDS}(D')$.  
If $D$ and $D'$ are related by an $R_1$ move as in Figure~\ref{fgdswitch1} (Right), then $S^{-1}(y,y)=(x,z)$. 
By a similar argument, we have $\mathrm{FDS}(D) \cong \mathrm{FDS}(D')$.

\begin{figure}[ht]
\vspace{3mm}
\includegraphics[width=8cm, height=2.5cm]{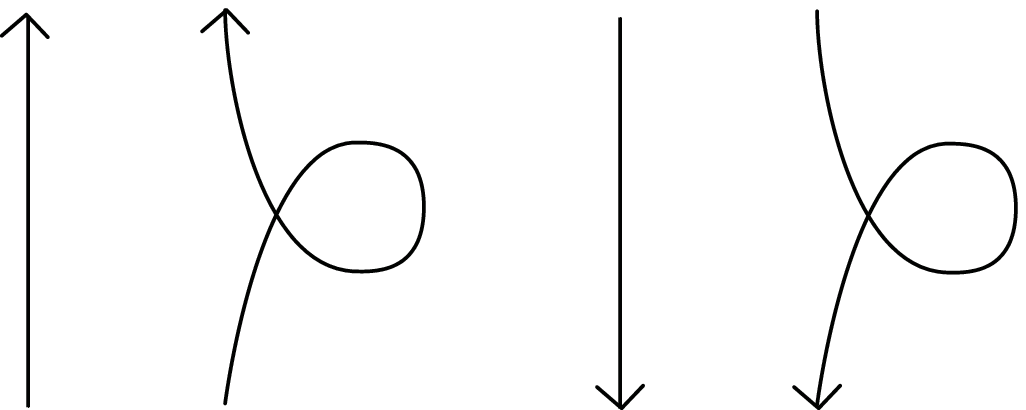}
\setlength{\unitlength}{1mm}
\begin{picture}(120,1)(0,0)

\put(18,9){${x}$}  

\put(34,9){${x}$}  
\put(46,18){${y}$} 
\put(33,25){${z}$} 
\put(16,0){ $D$ }   \put(38,0){ $D'$ }   

\put(64,25){${x}$}  

\put(80,25){${x}$}  
\put(94,18){${y}$} 
\put(80,10){${z}$}  
\put(66,0){ $D$ }   \put(84,0){ $D'$ }   

\end{picture}
\caption{}\label{fgdswitch1}
\end{figure}

If $D$ and $D'$ are related by an $R_2$ move as in Figure~\ref{fgdswitch2} (Left), then 
$S(y,w)= (x,z) =(w \cdot y, y \cdot w)$ and $S(w,v) = (z, u) = (v \cdot w, w \cdot v)$.  
This implies that 
$u=x$, $v=y$, $w= x \cdot^{-1} y$ and $z = y \cdot (x \cdot^{-1} y)$.   
Replacing $u$ with $x$ and $v$ with $y$ and removing  
$w$ and $z$ from the generating set of 
$\mathrm{FDS}(D')$, we obtain $\mathrm{FDS}(D)$. 
Thus $\mathrm{FDS}(D) \cong \mathrm{FDS}(D')$. 

If $D$ and $D'$ are related by an $R_2$ move as in Figure~\ref{fgdswitch2} (Middle), then 
$S(x,v)= (z,w) =(v \cdot x, x \cdot v)$ and $S(y,u) = (w,z) = \tau(z,w)$.  
This implies that 
$u=x$, $v=y$, $w= x \cdot y$ and $z = y \cdot x$.   
Replacing $u$ with $x$ and $v$ with $y$ and removing  
$w$ and $z$ from the generating set of 
$\mathrm{FDS}(D')$, we obtain $\mathrm{FDS}(D)$. 
Thus $\mathrm{FDS}(D) \cong \mathrm{FDS}(D')$. 

If $D$ and $D'$ are related by an $R_2$ move as in Figure~\ref{fgdswitch2} (Right), then 
$S(z,w)= (x,v)$ and $S(w,z) = (y,u)$.  
This implies that 
$u=x$, $v=y$, $w= x \bullet y$ and $z = y \bullet x$.   
Replacing $u$ with $x$ and $v$ with $y$ and removing  
$w$ and $z$ from the generating set of 
$\mathrm{FDS}(D')$, we obtain $\mathrm{FDS}(D)$. 
Thus $\mathrm{FDS}(D) \cong \mathrm{FDS}(D')$.

\begin{figure}[ht]
\includegraphics[width=12cm, height=3cm]{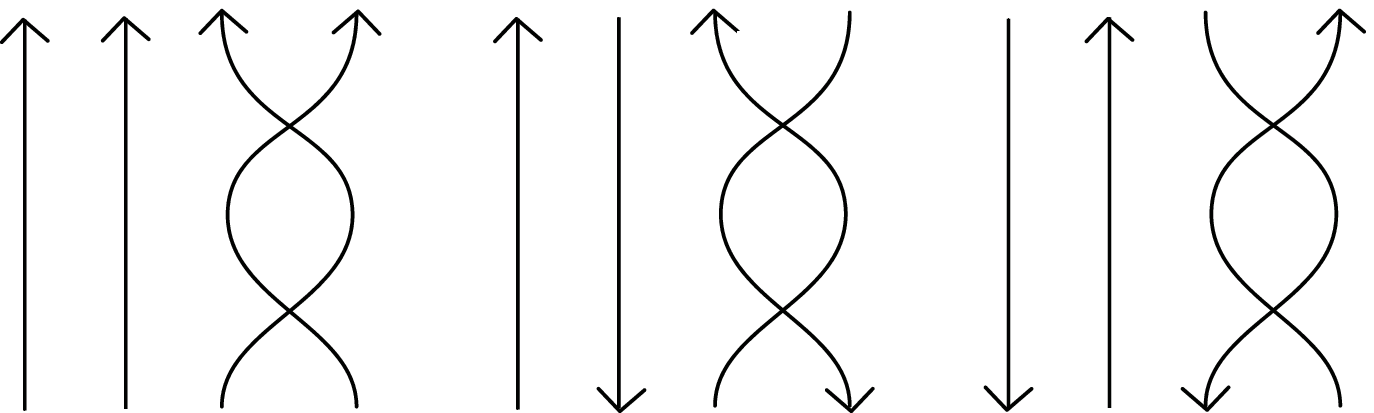}
\setlength{\unitlength}{1mm}
\begin{picture}(120,1)(0,0)
\put(-1,9){${x}$} 
\put(8,9){${y}$} 

\put(17,9){${x}$}   
\put(32,9){${y}$} 

\put(17,19){${z}$}  
\put(32,19){${w}$} 

\put(17,29){${u}$}   
\put(32,29){${v}$} 

\put(4,0){ $D$ }   \put(20,0){ $D'$ }   
\put(42,9){${x}$} 
\put(51,29){${y}$}  

\put(60,9){${x}$}   
\put(75,9){${v}$} 

\put(60,19){${z}$}   
\put(75,19){${w}$} 

\put(60,29){${u}$}  
\put(75,29){${y}$} 

\put(47,0){ $D$ }   \put(63,0){ $D'$ }   
\put(85,29){${x}$}  
\put(94,9){${y}$}   

\put(103,9){${u}$}  
\put(118,9){${y}$} 

\put(103,19){${z}$}  
\put(118,19){${w}$} 

\put(103,29){${x}$}   
\put(118,29){${v}$} 

\put(90,0){ $D$ }   \put(106,0){ $D'$ }   

\end{picture}
\caption{}\label{fgdswitch2}
\end{figure}

\end{proof}

A {\it coloring} of $D$ by a doodle switch $T$ is a  map from the set of semiarcs of $D$ to $T$ such that the relations associated to real crossings are satisfied.  
It corresponds to a homomorphism 
from the fundamental doodle switch $\mathrm{FDS}(D)$ to $T$.  
The {\it coloring number} of $D$ by $T$, denoted by $\mathrm{col}(D,T)$, is the cardinality of the set of colorings of $D$ by $T$.  

\begin{cor} 
Let $T$ be a doodle switch. Let $D$ be a virtual diagram. 
Then $\mathrm{col}(D,T)$ is an invariant of $D$ as a virtual doodle. 
\end{cor}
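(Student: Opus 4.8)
The plan is to read off the corollary from Theorem~\ref{thm:fundamentaldswitch} once one records the routine fact that, for a fixed target doodle switch $T$, the number of $T$-colorings of a virtual diagram depends only on the isomorphism type of its fundamental doodle switch.

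First I would make precise what a homomorphism of doodle switches is: a map $f\colon X\to Y$ with $f(x\cdot x')=f(x)\cdot f(x')$ for all $x,x'\in X$. Because the operations $\cdot^{-1}$, $\bullet$, $\bullet^{-1}$ and $t$ are uniquely determined by $\cdot$ (the preceding Proposition and the paragraph following it), such an $f$ automatically satisfies $f(x\cdot^{-1}x')=f(x)\cdot^{-1}f(x')$, $f(x\bullet x')=f(x)\bullet f(x')$, and so on; equivalently $f\times f$ intertwines the maps $S$ and $S^{-1}$ on the squares. With this in hand, the identification already recorded before the corollary — a coloring of $D$ by $T$ is the same thing as a homomorphism $\mathrm{FDS}(D)\to T$, because $\mathrm{FDS}(D)$ is presented by generators indexed by the semiarcs of $D$ and relators indexed by the real crossings of $D$ — gives $\mathrm{col}(D,T)=|\,\mathrm{Hom}(\mathrm{FDS}(D),T)\,|$.

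Second, I would observe that any isomorphism of doodle switches $\phi\colon A\to B$ induces a bijection $\mathrm{Hom}(B,T)\to\mathrm{Hom}(A,T)$ by $f\mapsto f\circ\phi$, with inverse $g\mapsto g\circ\phi^{-1}$; hence $|\,\mathrm{Hom}(A,T)\,|=|\,\mathrm{Hom}(B,T)\,|$ whenever $A\cong B$. Now, if $D$ and $D'$ are equivalent as virtual doodles, Theorem~\ref{thm:fundamentaldswitch} gives $\mathrm{FDS}(D)\cong\mathrm{FDS}(D')$, and combining the two facts above yields
\[
\mathrm{col}(D,T)=|\,\mathrm{Hom}(\mathrm{FDS}(D),T)\,|=|\,\mathrm{Hom}(\mathrm{FDS}(D'),T)\,|=\mathrm{col}(D',T),
\]
which is exactly the claim. (Equivalently, one may chain, along a finite sequence of moves $R_1,R_2,VR_1,\dots,VR_4$ connecting $D$ to $D'$, the explicit bijections between coloring sets that are implicit in the move-by-move analysis of the proof of Theorem~\ref{thm:fundamentaldswitch}.) I do not expect any genuine obstacle here: all the substance sits in Theorem~\ref{thm:fundamentaldswitch}, and what remains is the formal remark that the cardinality of $\mathrm{Hom}(-,T)$ is an isomorphism invariant of its first argument; the only point requiring a line of care is that a map preserving $\cdot$ automatically preserves the derived operations, so that the word ``homomorphism'' is unambiguous.
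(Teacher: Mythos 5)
Your proposal is correct and follows exactly the route the paper intends: the corollary is stated without proof as an immediate consequence of Theorem~\ref{thm:fundamentaldswitch} together with the identification of colorings with homomorphisms $\mathrm{FDS}(D)\to T$, and your observation that $|\mathrm{Hom}(-,T)|$ depends only on the isomorphism class of the source is the one missing formal step. Your remark that a map preserving $\cdot$ automatically preserves the derived operations $\cdot^{-1}$, $\bullet$, $\bullet^{-1}$ and $t$ is a sensible clarification that the paper leaves implicit.
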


\begin{exa}\label{example:T4aa}
{\rm 
Let $T =(T, \cdot)$ be a doodle switch with $4$ elements $x_1, \dots, x_4$ such that the multiplication matrix $A$ of the operation $\cdot$ is given by 
$$A=
\left(
\begin{matrix}
1&2&4&3\\
3&4&2&1\\
2&1&3&4\\
4&3&1&2
\end{matrix}
\right).$$ 
Here the multiplication matrix $A$ of a binary operation $\cdot$ on $\{x_i \, | \, 1\leq i \leq n\}$ means an $n \times n$-square matrix $A =(A_{ij})$ with 
 $A_{ij}=k$ if $x_i \cdot {x_j}=x_k$ for $i, j$ $(1 \leq i, j \leq n)$. 

Let $T'$ and $T''$ be doodle switches with $3$ elements presented by the following matrices $A'$ and $A''$ respectively: 
$$
A'=
\left(
\begin{matrix}
1&2&1\\
3&3&3\\
2&1&2\\
\end{matrix}
\right) 
\quad 
\mbox{and}
\quad 
A''=
\left(
\begin{matrix}
3&2&3\\
1&1&1\\
2&3&2\\
\end{matrix}
\right).$$

Let $U$ be the unknot.  Let $d_{3,1}$ be the virtual diagram depicted in Figure~\ref{fd31}, which    
corresponds to an immersed circle in a surface with Gauss word $a b c b^{-1} a^{-1} c^{-1}$ introduced by Carter \cite{rCarter91B}.  
Let $d_{4,1}, \dots, d_{4,6}$ be virtual diagrams depicted in Figure~\ref{d41}.  

\begin{figure}[ht]
\centerline{
\includegraphics[width=1.7cm]{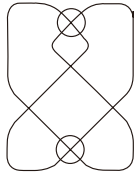} }
\caption{Carter's curve $d_{3,1}$}\label{fd31}
\end{figure}

\begin{figure}[ht]
\centerline{
\includegraphics[width=2.3cm]{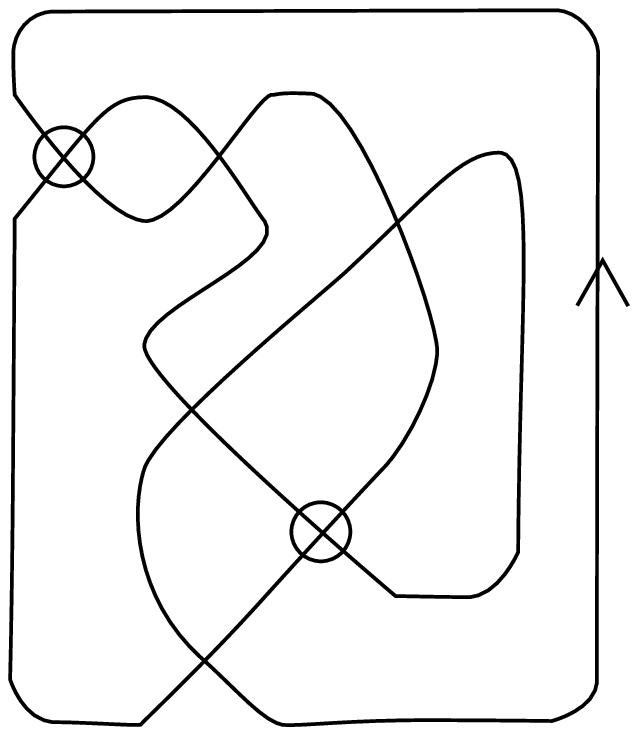}\hspace{.8cm}
\includegraphics[width=2.0cm]{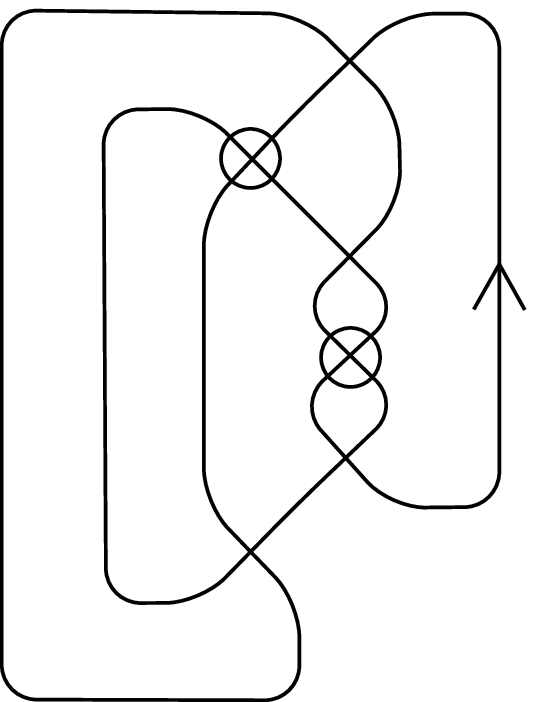}\hspace{.8cm}
\includegraphics[width=2.2cm]{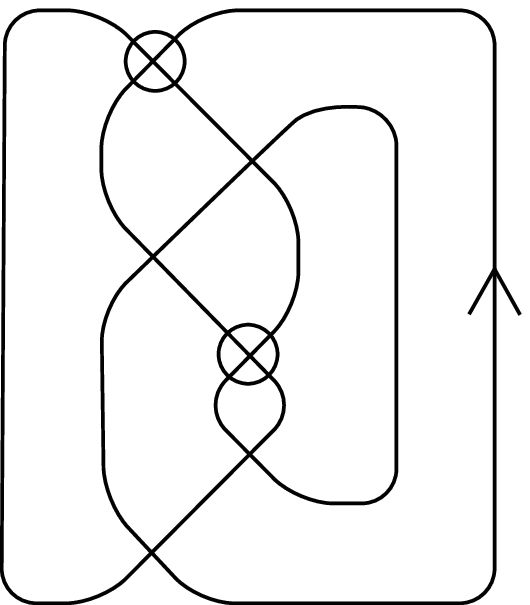}\hspace{.8cm}
}
\centerline{$d_{4,1}$\hspace{3cm}$d_{4,2}$\hspace{3cm}$d_{4,3}$}

\centerline{
\includegraphics[width=2.3cm]{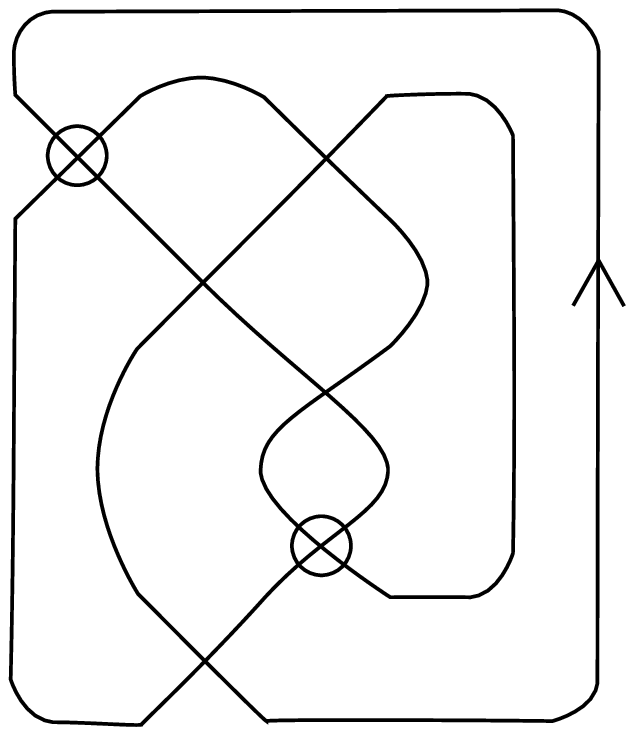}\hspace{.8cm}
\includegraphics[width=2.0cm]{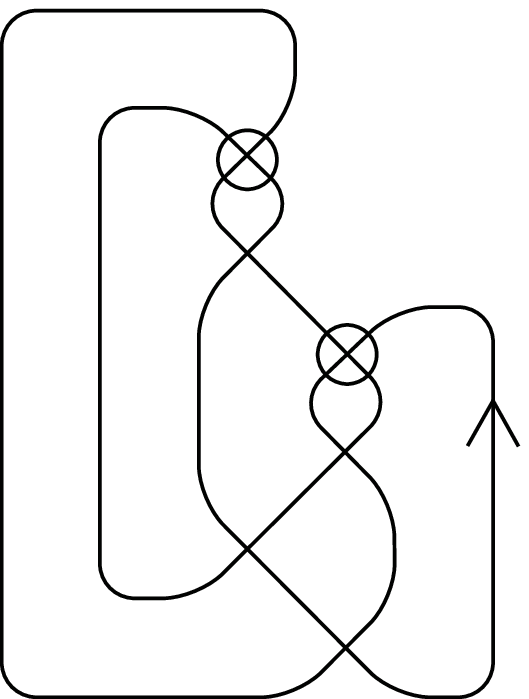}\hspace{.8cm}
\includegraphics[width=2.2cm]{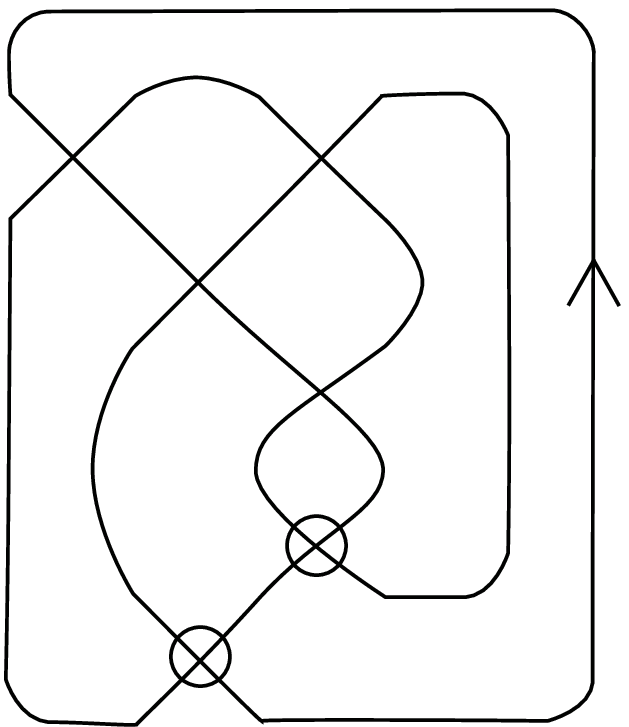}\hspace{.8cm}
}
\centerline{$d_{4,4}$\hspace{3cm}$d_{4,5}$\hspace{3cm}$d_{4,6}$}

\caption{Examples of virtual diagrams}\label{d41}
\end{figure}
 
The coloring numbers of these virtual diagrams by $T$, $T'$ and $T''$ 
are listed in Table~\ref{table:coloringnumbersT}. 

\begin{table}[htp]
\caption{Coloring numbers using $T$, $T'$ and $T''$}
\begin{center}
\begin{tabular}{c||c|c|c|c|c|c|c|c|}
& $U$ & $d_{3,1}$ & $d_{4,1}$ & $d_{4,2}$ & $d_{4,3}$ & $d_{4,4}$ & $d_{4,5}$ & $d_{4,6}$ \\
\hline
\hline
$\mathrm{col}(-,T)$ & 4 & 2 & 2 & 2 & 4 & 2 & 2 & 4\\
\hline
$\mathrm{col}(-,T')$ & 3 & 1 & 2 & 2 & 1 & 1 & 1 & 2\\
\hline
$\mathrm{col}(-,T'')$ & 3 & 1 & 1 & 1 & 1 & 1 & 1 & 2
\end{tabular}
\end{center}
\label{table:coloringnumbersT}
\end{table}%

For example, colorings of  $d_{4,1}$ by $T$, $T'$ and $T''$ are listed in 
Table~\ref{table:coloring41}, where $x_1, \dots, x_8$ are  
semiarcs of $d_{4,1}$ as in Figure~\ref{fd41ex}.  

\begin{table}[htp]
\caption{Colorings of $d_{4,1}$ by $T$, $T'$ and $T''$}
\begin{center}
\begin{tabular}{c||c|c|c|c|c|c|c|c|}
& $x_1$ & $x_2$ & $x_3$ & $x_4$ & $x_5$ & $x_6$ & $x_7$ & $x_8$ \\
\hline
\hline
$T$ & 1 & 1 & 1 & 1 & 1 & 1 & 1 & 1\\
\hline
$T$ & 3 & 3 & 3 & 3 & 3 & 3 & 3 & 3\\
\hline
\hline
$T'$ & 1 & 1 & 1 & 1 & 1 & 1 & 1 & 1\\
\hline
$T'$ & 3 & 2 & 3 & 2 & 3 & 1 & 3 & 2\\
\hline
\hline
$T''$ & 3 & 3 & 3 & 1 & 2 & 1 & 2 & 1 
\end{tabular}
\end{center}
\label{table:coloring41}
\end{table}%

\begin{figure}[ht]
\centerline{
\includegraphics[width=4.0cm]{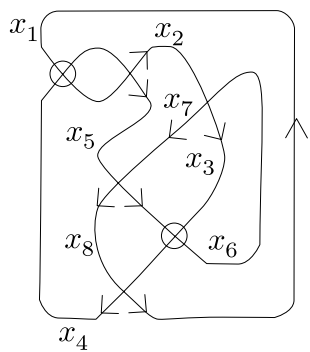} }
\caption{Colorings of $d_{4,1}$}\label{fd41ex}
\end{figure}

}\end{exa}

\begin{rem}\label{remarkA}
{\rm 
When a doodle switch $T$ satisfies an additional condition that 
$$(a \cdot b) \cdot (c \cdot b) = (a \cdot c) \cdot (b \cdot c)$$ 
for any $a, b, c \in T$, one can prove that if $D$ and $D'$ are related by an  $R_3$ move then 
$\mathrm{FDS}(D) \cong \mathrm{FDS}(D')$ and 
$\mathrm{col}(D,T) = \mathrm{col}(D',T)$.  (We omit the proof here.)

For example, the doodle switch $T$ in Example~\ref{example:T4aa} satisfies this condition, and 
$T'$ and $T''$ do not.  
The diagrams $d_{4,1}$ and $d_{4,4}$ are related by an $R_3$ move, and  they are distinguished by using 
the coloring number by $T'$. 
}\end{rem}

\section{Doubled colorings}\label{sect:doubledcoloring} 

In this section we introduce a {\it doubled coloring} of a virtual diagram $D$. We can consider some notions of \lq doubled coloring\rq. The notion we introduce here is one of them, which is inspired by a double covering of a virtual/twisted link in \cite{rkn2, rkk7}.

\begin{definition}{\rm 
The {\it doubled fundamental doodle switch} ${\mathrm{DFDS}}(D)$ 
of a virtual diagram  $D$ 
is the doodle switch 
generated by two letters $\overline{x}$ and $\underline{x}$ corresponding to every semiarc of $D$ (Figure~\ref{fg2}) with defining  relations associated to real crossings as follows. 
Let  $\overline{a},\underline{a}, \overline{b},\underline{b},\overline{c},\underline{c},\overline{d},\underline{d}$ be letters 
corresponding to the four simiarcs around a real crossing as in Figure~\ref{fgdoubledoodleswitch} (Left).  
The defining relations associated to the real crossing are 
$$
\overline{a}= \underline{d} \cdot {\underline{c}}, \quad 
\overline{b}= \underline{c} \cdot {\underline{d}}, \quad 
\overline{c}= \underline{b} \cdot {\underline{a}}, \quad \mbox{and} \quad  
\overline{d}= \underline{a} \cdot {\underline{b}}.$$  
}\end{definition}

\begin{figure}[ht]
\vspace{3mm}
\centerline{
\includegraphics[width=2.5cm]{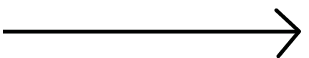}
}
\setlength{\unitlength}{1mm}
\begin{picture}(25,1)(0,0)
\put(5,9){$\overline{x}$} 
\put(5,4){$\underline{x}$} 
\end{picture}
\vspace{-1mm}
\caption{Two elements assigned to a semiarc}\label{fg2}
\end{figure}

\begin{figure}[ht]
\vspace{5mm}
\includegraphics[width=12cm]{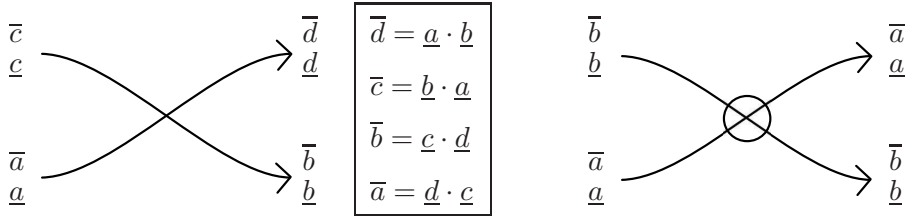}
\setlength{\unitlength}{1mm}
\begin{picture}(120,1)(0,0)
\put(0,26){$\overline{c}$}   \put(39,26){$\overline{d}$}  
\put(0,22){$\underline{c}$}   \put(39,22){$\underline{d}$}  
\put(0,9){$\overline{a}$}        \put(39,9){$\overline{b}$}  
\put(0,5){$\underline{a}$}       \put(39,5){$\underline{b}$}  

\put(48,26){{$\overline{d}= \underline{a} \cdot {\underline{b}}$}}
\put(48,19){{$\overline{c}= \underline{b} \cdot {\underline{a}}$}}
\put(48,12){{$\overline{b}= \underline{c} \cdot {\underline{d}}$}} 
\put(48,5){{$\overline{a}= \underline{d} \cdot {\underline{c}}$}} 
\put(46,3){\framebox(18,28){}}

\put(77,26){$\overline{b}$}   \put(117,26){$\overline{a}$}  
\put(77,22){$\underline{b}$}   \put(117,22){$\underline{a}$}  
\put(77,9){$\overline{a}$}        \put(117,9){$\overline{b}$}  
\put(77,5){$\underline{a}$}       \put(117,5){$\underline{b}$}  
\end{picture}
\caption{The defining relations associated to a crossing}\label{fgdoubledoodleswitch}
\end{figure}

For the sake of convenience, we call $\overline{x}$ (or $\underline{x}$) the {\it upper generator} 
(or {\it lower generator}) associated to a semiarc.

For a virtual diagram $D$ and a doodle switch  $T$, 
a {\it doubled coloring} of $D$ by $T$ is a map from the set of letters, i.e., upper and lower generators, to $T$ such that the relations associated to the real crossings are satisfied. 
It corresponds to a homomorphism from the doubled fundamental doodle switch of $D$ to $T$.  

The {\it doubled  coloring number} of $D$ by $T$ is 
the cardinality of the set of doubled colorings of $D$ by $T$, which is denoted by $\mathrm{dcol}(D,T)$.  

\begin{thm}\label{thmBFKK1}
Let $D$ and $D'$ be virtual diagrams.  If $D$ and $D'$ are equivalent as virtual doodles, then ${\mathrm{DFDS}}(D)$ 
and ${\mathrm{DFDS}}(D')$ are isomorphic as doodle switches. Consequently, the doubled coloring number by a doodle 
switch is an invariant of virtual doodles. 
\end{thm}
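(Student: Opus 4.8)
The plan is to imitate the proof of Theorem~\ref{thm:fundamentaldswitch}. Since the equivalence relation defining virtual doodles is generated by the moves $R_1, R_2, VR_1,\dots,VR_4$, it suffices to treat the case in which $D$ and $D'$ differ by a single such move; the assertion about $\mathrm{dcol}(\cdot,T)$ then follows at once, because a doubled coloring of $D$ by $T$ is by definition a homomorphism $\mathrm{DFDS}(D)\to T$, and an isomorphism $\mathrm{DFDS}(D)\cong\mathrm{DFDS}(D')$ induces a bijection between such homomorphisms.

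First I would deal with the four virtual moves. A semiarc of a virtual diagram is an immersed arc or loop that may carry any number of virtual crossings, and the defining relations of $\mathrm{DFDS}$ come only from real crossings; none of $VR_1, VR_2, VR_3, VR_4$ creates or destroys a real crossing or alters which semiarc-ends meet a given real crossing. Hence each of these moves leaves the generating set (the upper and lower generator of every semiarc) and the set of defining relations of $\mathrm{DFDS}$ literally unchanged, and the identity on generators is the required isomorphism.

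The substance is in $R_1$ and $R_2$, where I would run the elimination argument of Theorem~\ref{thm:fundamentaldswitch} one layer at a time. For an $R_1$ move I would label the three semiarcs of $D'$ near the kink and write out the four doubled relations at the self-crossing; since two of the four local semiarc-ends belong to the loop, these relations — together with the bijectivity of $S$ (Axiom~(2)) and the derived operations $\cdot^{-1},\bullet,\bullet^{-1}$ of the preceding Proposition — force the two generators of the outgoing arc to equal those of the incoming arc and express the loop's two generators in terms of them, so that deleting the loop's generators and the now-redundant relations returns the presentation of $\mathrm{DFDS}(D)$; the two versions of the move (Figure~\ref{fgdswitch1}) are dual, using $S$ and $S^{-1}$. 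For an $R_2$ move one has, in each of the three configurations of Figure~\ref{fgdswitch2}, four semiarcs and two real crossings, hence eight local generators and eight relations; as in Theorem~\ref{thm:fundamentaldswitch}, solving the relations at one of the two crossings and substituting into those at the other shows that the four ``middle'' generators are determined while the four ``top'' generators coincide with the corresponding ``bottom'' ones, so the middle generators and the redundant relations can be removed to recover $\mathrm{DFDS}(D)$. In every case the semiarcs away from the move, and their relations, are untouched, so the local identification of generators extends to a global isomorphism.

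The main obstacle is bookkeeping rather than anything conceptual: in the $R_2$ cases one must track eight local generators through two coupled crossings in each of the three configurations (and for each choice of orientation) and check that the eliminations are mutually consistent. The doubling merely replaces each scalar relation $c=b\cdot a$ by the coupled quadruple that exchanges the upper and lower layers, but every cancellation still reduces to the slot-wise bijectivity supplied by the Axioms and the preceding Proposition, so no new phenomenon appears. Alternatively, the theorem can be obtained from Section~\ref{sect:doublecover} via $\mathrm{DFDS}(D)\cong\mathrm{FDS}(\widetilde D)$, the fact that $D\sim D'$ implies $\widetilde D\sim\widetilde{D'}$, and Theorem~\ref{thm:fundamentaldswitch}; but the direct verification above is self-contained.
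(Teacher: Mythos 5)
Your argument is correct, but it is not the route the paper takes: the paper proves Theorem~\ref{thmBFKK1} in one line at the end of Section~\ref{sect:doublecover}, as a consequence of Theorem~\ref{thm:fundamentaldswitch} (invariance of $\mathrm{FDS}$), Theorem~\ref{thmBFKK2} ($D\sim D'$ implies $\widetilde D\sim\widetilde{D'}$) and Theorem~\ref{thmBFKK4} ($\mathrm{DFDS}(D)\cong\mathrm{FDS}(\widetilde D)$) --- exactly the ``alternative'' you relegate to your final sentence. Your primary argument, the direct move-by-move Tietze-style verification on the doubled presentation, does go through: the virtual moves indeed leave the generating set and relations of $\mathrm{DFDS}$ untouched, and in the $R_1$ and $R_2$ cases the four relations of Figure~\ref{fgdoubledoodleswitch} at each crossing eliminate cleanly. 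For instance, at a kink the two relations expressing $\overline{y}$ (the loop's upper generator) read $\overline{y}=\underline{z}\cdot\underline{y}$ and $\overline{y}=\underline{x}\cdot\underline{y}$, so Axiom~(2) forces $\underline{x}=\underline{z}$, whence $\overline{x}=\overline{z}$ and $\underline{y},\overline{y}$ are determined; and in the $R_2$ case the two expressions for the pair $(\overline{w},\overline{z})$ of middle upper generators give $S(\underline{x},\underline{y})=S(\underline{u},\underline{v})$, so injectivity of $S$ identifies top with bottom while the middle generators are eliminated via $S^{-1}$. The trade-off between the two proofs is clear: your direct check is self-contained and makes Theorem~\ref{thmBFKK1} independent of the double-covering construction, at the price of re-running the case analysis of Theorem~\ref{thm:fundamentaldswitch} (over all three $R_2$ configurations and both $R_1$ configurations of Figures~\ref{fgdswitch1} and~\ref{fgdswitch2}, doubled); the paper's route does that case analysis only once, for $\mathrm{FDS}$, and converts the doubled statement into the undoubled one by a purely diagrammatic argument. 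To make your version complete you would need to actually display the eliminations in each configuration, but, as you say, no phenomenon beyond slot-wise applications of the Axioms appears.
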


We prove this theorem in the next section.

\begin{exa}\label{example:dcol}{\rm 
Let $T$ be the doodle switch in Example~\ref{example:T4aa}.  
The coloring numbers and 
doubled coloring numbers of $U$, $d_{3,1}$, $d_{4,1}$, \dots, $d_{4,6}$ 
by $T$ are listed in Table~\ref{table:coloringnumbersB}. 

\begin{table}[htp]
\caption{Coloring and doubled coloring numbers by $T$}
\begin{center}
\begin{tabular}{c||c|c|c|c|c|c|c|c|}
& $U$ & $d_{3,1}$ & $d_{4,1}$ & $d_{4,2}$ & $d_{4,3}$ & $d_{4,4}$ & $d_{4,5}$ & $d_{4,6}$ \\
\hline
\hline
$\mathrm{col}(-,T)$ & 4 & 2 & 2 & 2 & 4 & 2 & 4 & 4\\
\hline
$\mathrm{dcol}(-,T)$ & 16 & 16 & 16 & 16 & 256 & 256 & 256 & 256
\end{tabular}
\end{center}
\label{table:coloringnumbersB}
\end{table}%

}\end{exa}

\begin{rem}{\rm 
Since $T$ in Example~\ref{example:T4aa} satisfies the condition in Remark~\ref{remarkA}, 
 $d_{4,1}$ and $d_{4,4}$ are not distinguished by the coloring number by $T$.  
However, $d_{4,1}$ and $d_{4,4}$ are distinguished by the doubled coloring number by $T$.  
}\end{rem}

\section{Double coverings for virtual diagrams} \label{sect:doublecover}

In this section, we introduce the notion of a double covering $\widetilde{D}$ of a virtual diagram $D$, which is inspired by a double covering of a virtual or twisted link diagram in \cite{rkn2, rkk7}.  
We show that if $D$ and $D'$ are equivalent as virtual doodles then so are their double coverings (Theorem~\ref{thmBFKK2}). 
Thus the double covering is well-defined for a virtual doodle. 
It turns out that the doubled fundamental doodle switch of $D$ is isomorphic to the fundamental doodle switch of a double covering $\widetilde{D}$ 
(Theorem~\ref{thmBFKK4}).

Let $D$ be a virtual diagram. Put two points on two semiarcs around each real crossing  of $D$ as in Figure~\ref{fgdcut}.   We call the points {\it cut points}.   The set of cut points of $D$ is denoted by $P_D$ and is called the {\it cut system} of $D$. 
(We do not introduce cut points to virtual crossings.) 

\begin{figure}[ht]
\centerline{
\includegraphics[width=1.5cm]{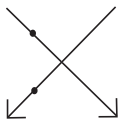}}
\caption{Cut points for a crossing}\label{fgdcut}
\end{figure}

Let $D$ be a virtual diagram $D$ and $P_D = \{p_1,\dots, p_k\}$ the cut system.  
By an isotopy of $\R^2$, we deform $D$ such that $D$ is in the half plane $x>0$ and that 
the $y$-coordinates of the cut points are all distinct.  
    Let  $D^*$ be the virtual diagram obtained by reflecting  $D$  with respect to the $y$-axis, and let 
$P_{D^*} = \{p_1^*, \dots, p_k^*\}$  be  the image of $P_D$ on $D^*$.  See Figure~\ref{fgexdbl1}.  

\begin{figure}[ht]
\centerline{\includegraphics[width=6cm]{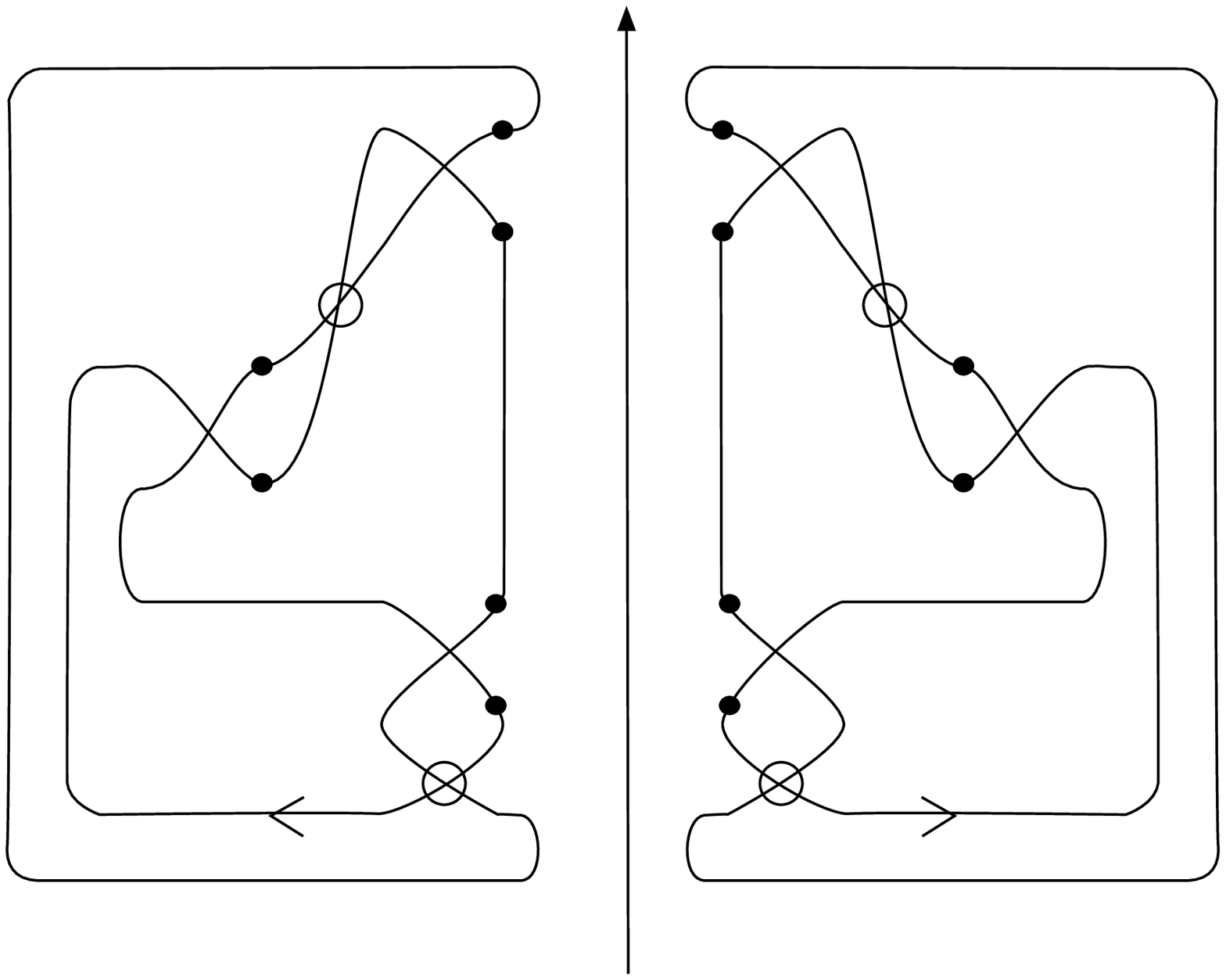}}
\centerline{$(D^*, P_{D^*})$\hspace{2cm}$(D, P_{D})$}
\caption{A virtual diagram $D$ with  cut system $P_{D}$ and the reflection images}\label{fgexdbl1}
\end{figure}
  
For each $i=1, \dots, k$,  
let $l_i$ be the horizontal line passing through $p_i$ and $p_i^*$.   
If necessary, by an isotopy of $\R^2$, we may assume that  $D \cup D^*$ intersects with 
$l_i$ transversely.  
Let $\widetilde{D}$ be a virtual diagram obtained by replacing 
$D \cup D^*$ as in Figure~\ref{fgconvert2}  in a regular neighbourhood $N(l_i)$ of $l_i$ for every $i=1, \dots, k$.  
We call the diagram $\widetilde{D}$ a {\it double covering} of $D$.  

\begin{figure}[ht]
\centerline{
\includegraphics[width=8cm]{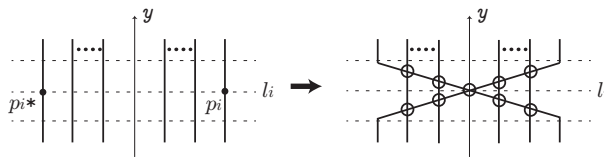}}
\caption{Replacement near cut points}\label{fgconvert2}
\end{figure}

For example, for the diagram $D$  with $P_D$ depicted in Figure~\ref{fgexdbl1}, a double covering $\widetilde{D}$  is as in Figure~\ref{fgconvert1}.

\begin{figure}[ht]
\centerline{\includegraphics[width=5.cm]{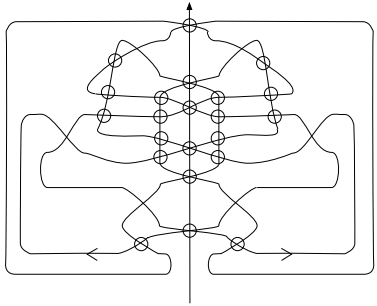}}
\caption{A double covering of a virtual diagram}\label{fgconvert1}
\end{figure}

\begin{thm}\label{thmBFKK2}
Let $D$ and $D'$ be virtual diagrams. If $D$ and $D'$ are equivalent as virtual doodles,  
then  so are $\widetilde{D}$ and $\widetilde{D'}$.  
\end{thm}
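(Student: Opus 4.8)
The plan is to reduce the statement to a local (move-by-move) verification, exactly as in the proof of Theorem~\ref{thm:fundamentaldswitch}. If $D$ and $D'$ are equivalent as virtual doodles, there is a finite sequence of diagrams $D=D_0, D_1, \dots, D_n=D'$ in which consecutive terms differ by a single move from the list $R_1, R_2, VR_1, VR_2, VR_3, VR_4$, together with a planar isotopy. Since the double covering construction is natural with respect to planar isotopy (one may carry along the half-plane placement and the horizontal lines $l_i$), it suffices to show: \emph{if $D'$ is obtained from $D$ by a single one of these moves, then $\widetilde{D'}$ is obtained from $\widetilde{D}$ by a sequence of the same six moves.} Then one chains the local equivalences together.

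First I would fix a normal form for the construction so that the cut system behaves predictably: place $D$ in $\{x>0\}$ with the support of the move in a small disk $B$, and isotope so that the cut points near $B$ have $y$-coordinates in a narrow band disjoint from all other cut points. Then $\widetilde D$ restricted to a neighbourhood of $B\cup B^*$ (where $B^*$ is the mirror disk) depends only on $D\cap B$ and the chosen cut points inside $B$, together with the crossing connections through the lines $l_i$ that meet $B$; everything outside is unchanged between $\widetilde D$ and $\widetilde{D'}$. So the whole theorem comes down to a finite case check, one picture per move, each asserting that the two local tangles obtained after doubling are related by flat Reidemeister/virtual moves avoiding $R_3$.

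The case analysis splits naturally. For the virtual moves $VR_1$--$VR_4$, there are no real crossings in the support, hence no cut points are created or destroyed, and the doubling simply replaces the local picture by two parallel mirror-image copies glued in; a $VR_i$ move upstairs becomes a pair of $VR_i$ moves (possibly with some $VR_3/VR_4$ to slide strands past each other) downstairs. For $R_1$, a single real crossing with its two cut points is created; after doubling one gets a small tangle that can be undone by one $R_1$ move on each of the two sheets plus virtual moves to disentangle them. For $R_2$, two real crossings with four cut points are created; after doubling one gets four real crossings, and one must exhibit an explicit sequence of $R_2$ moves (two of them) together with $VR$ moves that cancels them. Crucially, none of these local reductions should require an $R_3$ move — this is the point that must be checked and is the reason the claim is nontrivial rather than automatic.

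The main obstacle will be the $R_2$ case: organizing the four cut points and the resulting four-crossing tangle so that the cancellation is visibly by $R_2$ and virtual moves only, with the correct strand orientations, and independent of the two possible orientations/configurations of the $R_2$ move (as in the three sub-cases appearing in Figure~\ref{fgdswitch2}). I would handle this by drawing $\widetilde D$ near the move explicitly, identifying the four doubled crossings, and showing they pair up into two ``bigon'' configurations each removable by a single $R_2$, with the leftover virtual crossings cleared by $VR_2$ and $VR_3$. Once that picture is in hand, the remaining cases are routine, and assembling the local equivalences along the original move sequence yields that $\widetilde D$ and $\widetilde{D'}$ are equivalent as virtual doodles, proving the theorem.
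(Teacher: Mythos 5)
Your proposal follows essentially the same route as the paper's proof: reduce to a single local move, handle planar isotopy and the virtual moves $VR_1$--$VR_4$ by detour (virtual) moves, and cancel the doubled real crossings with two $R_1$ moves in the $R_1$ case and two $R_2$ moves in the $R_2$ case, never invoking $R_3$. The structure and the key observations match the paper's case-by-case figure check, so the plan is sound.
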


\begin{proof} 
We assume that $D$ is in the half plane $x > 0$ and the $y$-coordinates of the cut points are all distinct. 

If $D'$ is obtained from $D$ by an isotopy of $\R^2$, then $\widetilde{D}$ and $\widetilde{D'}$ are related by detour moves. 
(An sub-path of a virtual diagram which passes only through virtual crossings is called a {\it virtual path}. 
A {\it detour move} is a replacement of a virtual path with another virtual path, \cite{rBFKK}. A detour move is a consequence of $VR_1$, \dots, $VR_4$ moves.)  Thus, $\widetilde{D}$ and $\widetilde{D'}$ are equivalent as virtual doodles.

Let $D$ and $D'$ be virtual diagrams such that $D'$ is obtained from $D$ by a local move depicted in Figure~\ref{fgmoves}. 
   Let $U$ be a disc in $\R^2$ where the the local move is applied to $D$, $U^*$ the reflection image of $U$, 
and let $N$ be the smallest convex disc containing $U$ and $U^*$. 

First we consider an $R_1$ move depicted in the most left and the second left of Figure~\ref{fgDblCovFRI}.   
Figure~\ref{fgDblCovFRI}  shows the restrictions 
$(D^* \cup D) \cap N$, $(D'^* \cup D') \cap N$, $\widetilde{D} \cap N$ and $\widetilde{D'} \cap N$. 
The diagram $D^* \cup D$ and $D'^* \cup D'$ are the identical outside of $N$, and 
so are $\widetilde{D}$ and $\widetilde{D'}$.   
It is easily seen that the restriction $\widetilde{D'} \cap N$ is transformed into $(D'^* \cup D') \cap N$ by 
$VR_1$, \dots, $VR_4$ moves, since they are related by detour moves.  Then $(D'^* \cup D') \cap N$ is transformed into  $\widetilde{D} \cap N$ by two $R_1$ moves.  
Thus $\widetilde{D}$ and $\widetilde{D'}$ are equivalent as virtual doodles.  
The other cases for an $R_1$ move is similar, and we omit the proof. 

\begin{figure}[ht]
\centerline{
\begin{tabular}{cc}
\includegraphics[width=6cm]{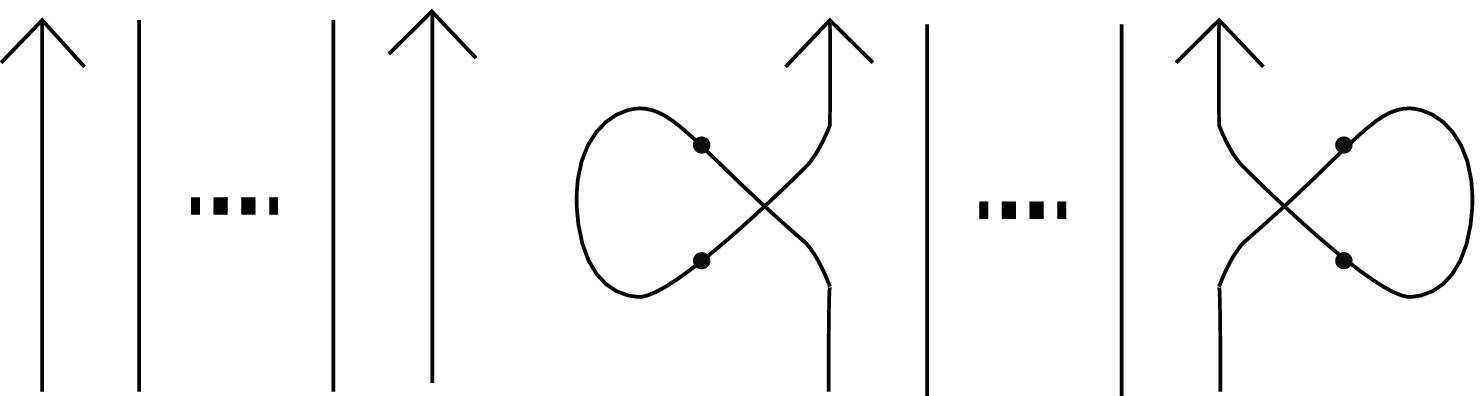}\hspace{.5cm}&\hspace{.5cm} \includegraphics[width=6cm]{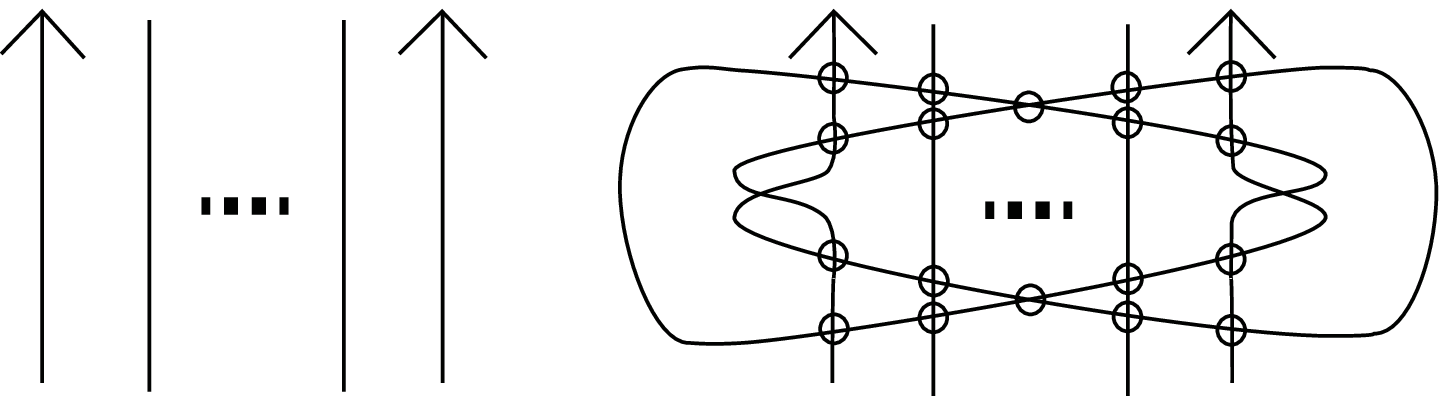}\\
$D^*\cup D$\phantom{MMMMM}$D'^*\cup D'$\phantom{MM}
&$\widetilde{D}$ \phantom{MMMMMMMM}$\widetilde{D'}$\\
\end{tabular} 
}
\caption{Double coverings of two diagrams related by an $R_1$ move}\label{fgDblCovFRI}
\end{figure}

We consider an $R_2$ move depicted in the most left and the second left (on the first row or the  second) 
of Figure~\ref{fgDblCovFRII}.  
Figure~\ref{fgDblCovFRII}  shows 
the restrictions  
$(D^* \cup D) \cap N$, $(D'^* \cup D') \cap N$, $\widetilde{D} \cap N$ and $\widetilde{D'} \cap N$.  
Remove the four real crossings from the restriction $\widetilde{D'} \cap N$ by $R_2$ moves.  
Then the result is transformed into   $\widetilde{D} \cap N$  by 
$VR_1$, \dots, $VR_4$ moves, since they are related by detour moves.  
Thus $\widetilde{D}$ and $\widetilde{D'}$ are equivalent as virtual doodles.  
The other cases for an $R_2$ move is similar, and we omit the proof. 

 \begin{figure}[ht]
 \centerline{
 \begin{tabular}{cc}
\includegraphics[width=6.5cm]{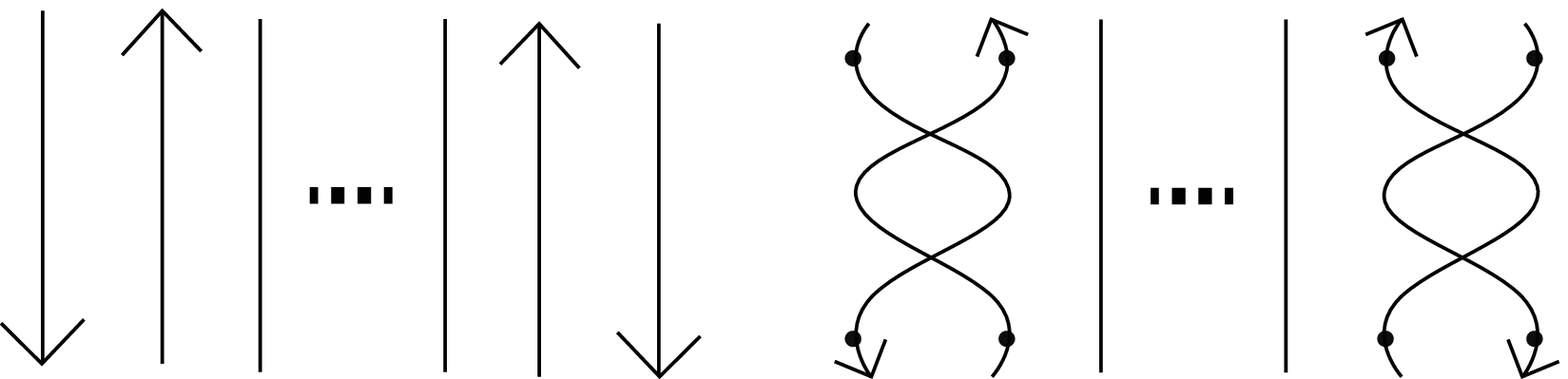}\hspace{.5cm}&\hspace{.5cm} \includegraphics[width=6.5cm]{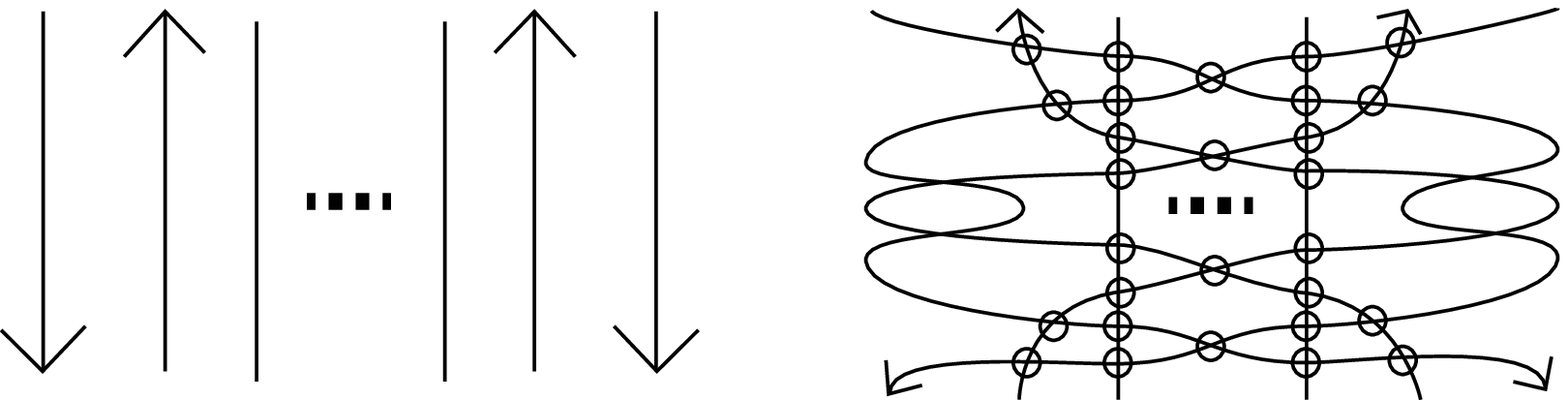}\\
$D^*\cup D$\phantom{MMMMMM}$D'^*\cup D'$
&\phantom{MM}$\widetilde{D}$ \phantom{MMMMMMMMM}$\widetilde{D'}$\\
\includegraphics[width=6.5cm]{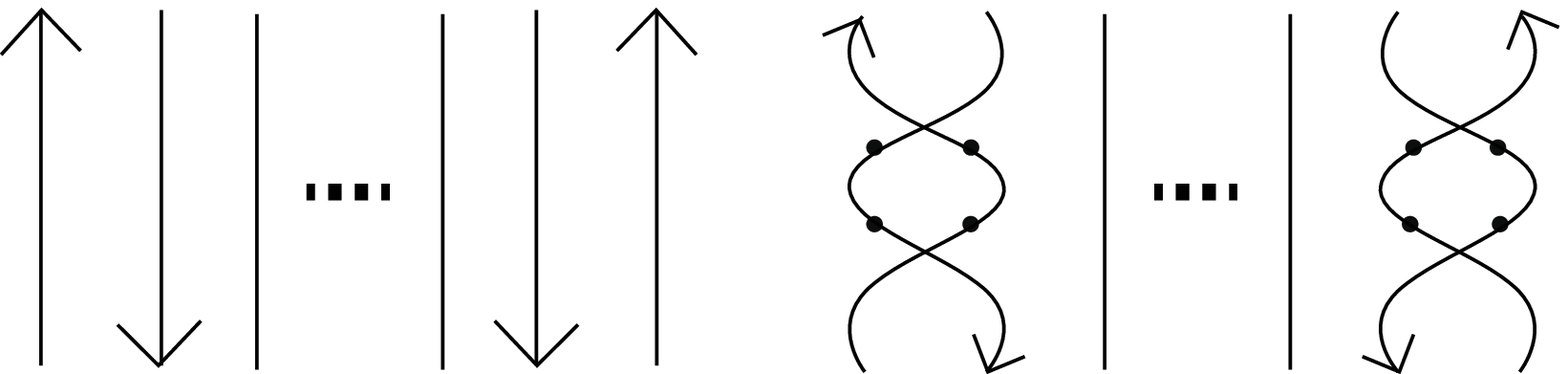}\hspace{.5cm}&\hspace{.5cm} \includegraphics[width=6.5cm]{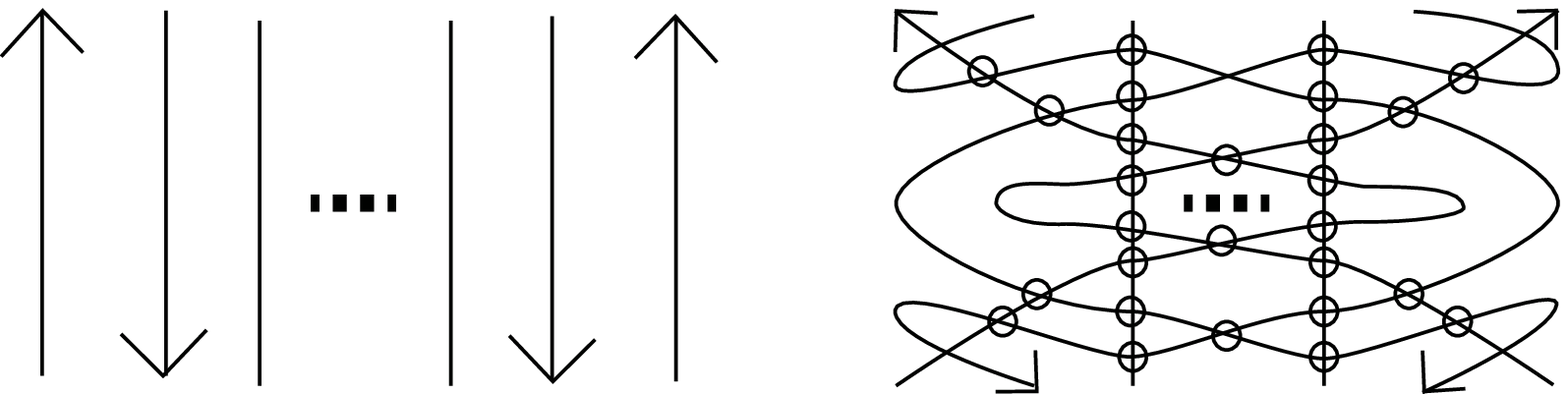}\\
$D^*\cup D$\phantom{MMMMMM}$D'^*\cup D'$
&\phantom{MM}$\widetilde{D}$ \phantom{MMMMMMMMM}$\widetilde{D'}$\\
\end{tabular} 
}
\caption{Double coverings of two diagrams related by an $R_2$ move}\label{fgDblCovFRII}
\end{figure}

When the local move is a $VR_1$, $VR_2$ or $VR_3$ move, then the restriction 
$\widetilde{D'} \cap N$ is transformed into  $\widetilde{D} \cap N$ by 
$VR_1$, \dots, $VR_4$ moves.  
Thus $\widetilde{D}$ and $\widetilde{D'}$ are equivalent as virtual doodles.  

We consider an $VR_4$ move depicted in the most left and the second left 
of Figure~\ref{fgDblCovVFRIV}.   
Figure~\ref{fgDblCovVFRIV}  shows 
the restrictions 
$(D^* \cup D) \cap N$, $(D'^* \cup D') \cap N$, $\widetilde{D} \cap N$ and $\widetilde{D'} \cap N$. 
The restriction 
$\widetilde{D'} \cap N$ is transformed into  $\widetilde{D} \cap N$ by 
$VR_1$, \dots, $VR_4$ moves, since they are related by detour moves.    
Thus $\widetilde{D}$ and $\widetilde{D'}$ are equivalent as virtual doodles.  
\end{proof}

\begin{figure}[ht]
 \centerline{
 \begin{tabular}{cc}
\includegraphics[width=6.5cm]{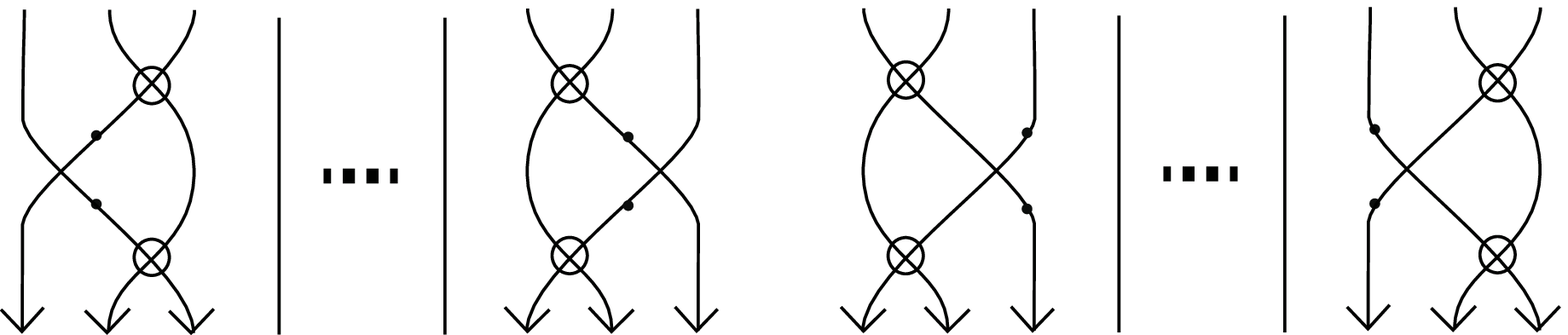}&\includegraphics[width=6.5cm]{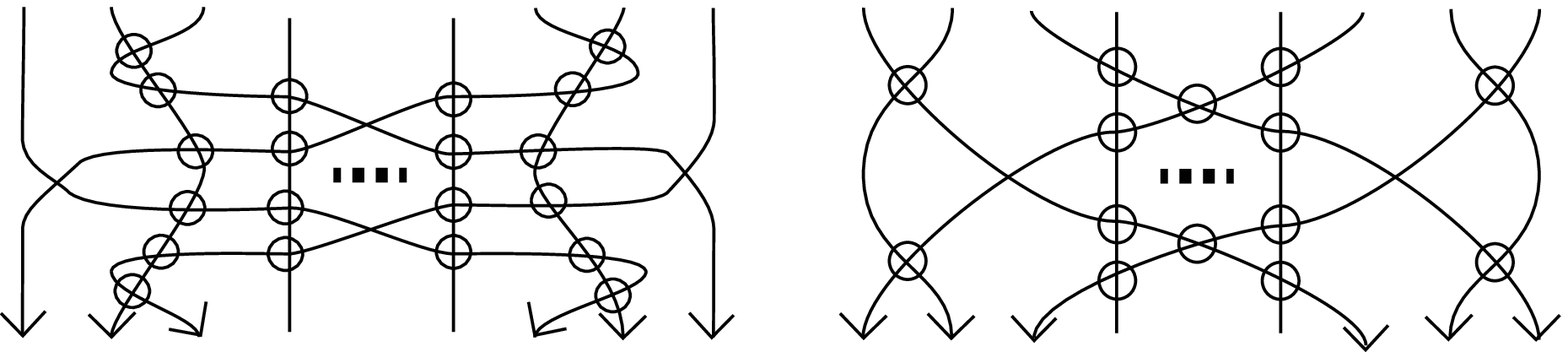}\\
$D^*\cup D$\phantom{MMMMMM}$D'^*\cup D'$
&$\widetilde{D}$\phantom{MMMMMMMM}$\widetilde{D'}$ \\
\end{tabular}}
\caption{Double coverings  of two diagrams related by a $VR_4$ move}\label{fgDblCovVFRIV}
\end{figure}

\begin{thm}\label{thmBFKK4} 
Let $D$ be a virtual diagram and let $\widetilde{D}$ be the double covering. Then 
${\mathrm{DFDS}}(D)$ and ${\mathrm{FDS}}(\widetilde{D})$ are isomorphic as doodle switches. 
\end{thm}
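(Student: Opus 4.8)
The plan is to exhibit an explicit bijection between the generating sets of $\mathrm{DFDS}(D)$ and $\mathrm{FDS}(\widetilde{D})$ and check that it carries defining relations to defining relations. First I would analyze the semiarc structure of $\widetilde{D}$. Recall that $\widetilde{D}$ is built from $D^* \cup D$ by inserting, at each cut point $p_i$ (and its mirror $p_i^*$), the local replacement of Figure~\ref{fgconvert2}. Away from the lines $l_i$, the diagram $\widetilde{D}$ consists of a copy of $D$ and a copy of $D^*$; the insertions at the $l_i$ reconnect these two copies into a single new diagram whose real crossings are exactly the real crossings of $D$ together with their mirror images in $D^*$. Each semiarc $\alpha$ of $D$ gives rise to a portion of $\widetilde{D}$ lying in the $D$-part and a portion lying in the $D^*$-part; since a semiarc of $D$ runs between real crossings and carries one cut point near each of its two endpoints, cutting $\widetilde{D}$ at its virtual crossings produces, for each semiarc $\alpha$ of $D$, exactly two semiarcs of $\widetilde{D}$ — one coming from $D$ and one from $D^*$. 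I would name these $\widetilde{\alpha}^{+}$ (the $D$-side) and $\widetilde{\alpha}^{-}$ (the $D^*$-side), and define the candidate isomorphism $\varphi: \mathrm{DFDS}(D) \to \mathrm{FDS}(\widetilde{D})$ on generators by $\overline{\alpha} \mapsto \widetilde{\alpha}^{+}$ and $\underline{\alpha} \mapsto \widetilde{\alpha}^{-}$ (possibly after a convention-dependent relabeling forced by the pictures).

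Next I would verify that $\varphi$ respects relations. A real crossing $v$ of $D$ with the four semiarcs $a,b,c,d$ labeled as in Figure~\ref{fgdoubledoodleswitch}~(Left) produces in $\widetilde{D}$ exactly two real crossings: the crossing $v^{+}$ in the $D$-part and its mirror $v^{-}$ in the $D^*$-part. The replacement of Figure~\ref{fgconvert2} at the cut points adjacent to $v$ has the effect of routing the strands so that the semiarcs around $v^{+}$ are the $D^*$-side copies $\underline{a},\underline{b},\underline{c},\underline{d}$ as \emph{incoming/outgoing} arcs and the $D$-side copies $\overline{c},\overline{d}$ as the other pair — precisely matching the $\mathrm{DFDS}$ relations $\overline{c}=\underline{b}\cdot\underline{a}$, $\overline{d}=\underline{a}\cdot\underline{b}$ at $v^{+}$; and symmetrically the crossing $v^{-}$ yields $\overline{a}=\underline{d}\cdot\underline{c}$, $\overline{b}=\underline{c}\cdot\underline{d}$. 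In other words, under the $\mathrm{FDS}(\widetilde{D})$ recipe "$c=b\cdot a$, $d=a\cdot b$" applied to each of $v^{+}$ and $v^{-}$, and with the identification $\varphi$, we recover exactly the four $\mathrm{DFDS}(D)$ relations at $v$. Since every defining relation of $\mathrm{FDS}(\widetilde{D})$ arises from some $v^{+}$ or $v^{-}$, the two presentations have the same generators (via $\varphi$) and the same relations, hence $\varphi$ extends to an isomorphism of doodle switches. I would present the crossing-by-crossing check with one carefully labeled figure showing the neighborhood of $v^{+}$ and $v^{-}$ inside $\widetilde{D}$ with the strands traced from the cut-point replacements.

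The main obstacle I anticipate is bookkeeping of orientations and over/under — rather, since these are flat (doodle) diagrams there is no over/under, but the \emph{planar} position of the four semiarcs around a crossing (which arc is which of the four in the pattern $c=b\cdot a$, $d=a\cdot b$) must be read off correctly from the double-covering construction. The replacement in Figure~\ref{fgconvert2} swaps which copy ($D$ or $D^*$) a strand belongs to as it passes a cut point, so the two semiarcs of $\widetilde{D}$ associated to a semiarc of $D$ genuinely interleave the $\overline{\phantom{x}}$ and $\underline{\phantom{x}}$ labels around each crossing in the way dictated by the $\mathrm{DFDS}$ definition; getting the labels aligned with the conventions already fixed in Figures~\ref{fgdswitch} and~\ref{fgdoubledoodleswitch} is the one place where care is required. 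Once that alignment is pinned down by the figure, the remainder is a formal comparison of presentations: no further appeal to Reidemeister moves is needed, because $\widetilde{D}$ is a \emph{fixed} diagram here and well-definedness of the construction up to virtual-doodle equivalence has already been handled by Theorem~\ref{thmBFKK2}.
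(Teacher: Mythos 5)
Your proposal is correct and follows essentially the same route as the paper: the paper's proof also identifies the two semiarcs of $\widetilde{D}$ covering each semiarc of $D$ with the upper and lower generators and then observes, via a labeled local picture of the pair of crossings $v^{+}$, $v^{-}$, that the presentations of $\mathrm{FDS}(\widetilde{D})$ and $\mathrm{DFDS}(D)$ coincide. The only point you handle more explicitly than the paper is the interleaving of the two covering semiarcs across the cut-point replacements, which is exactly the bookkeeping the paper delegates to its figure.
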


\begin{proof} 
Without of loss of generality, we may assume that all real crossings 
of $D$ (and $\widetilde{D}$) are oriented downward.  For each semiarc of $D$, there are two semiarcs of $\widetilde{D}$ covering  
the semiarc.  
When we label the semiarcs of $\widetilde{D}$ as in Figure~\ref{fgDblCovCrs},  
the generators and relations for the fundamental doodle switch of $\widetilde{D}$ are the same with the generators and relations for the doubled fundamental  doodle switch of $D$.   
\end{proof}

\begin{figure}[ht]
\vspace{8mm} 
\includegraphics[width=3.5cm]{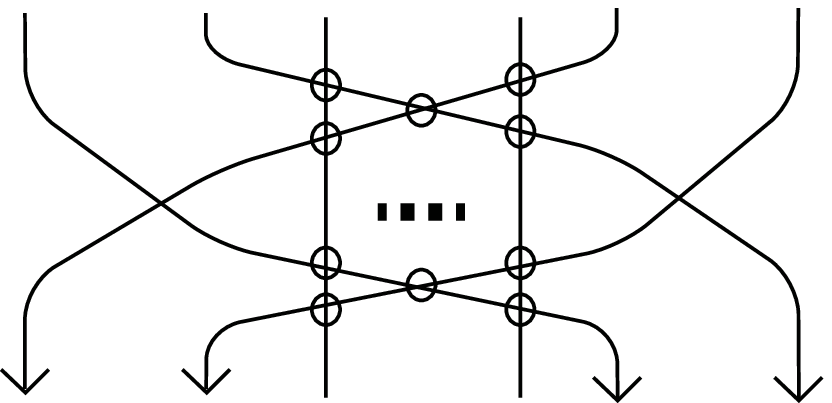} 
\setlength{\unitlength}{1mm}

\begin{picture}(120,10)(-46,0)
\put(-5,30){$\underline{c}$}  \put(3,30){$\underline{a}$}  
\put(21,30){$\overline{a}$}  \put(29,30){$\overline{c}$} 

\put(-5,5){$\underline{d}$}  \put(3,5){$\underline{b}$}  
\put(21,5){$\overline{b}$}  \put(29,5){$\overline{d}$} 

\put(38,26){{$\overline{d}= \underline{a} \cdot {\underline{b}}$}}
\put(38,19){{$\overline{c}= \underline{b} \cdot {\underline{a}}$}}
\put(38,12){{$\overline{b}= \underline{c} \cdot {\underline{d}}$}} 
\put(38,5){{$\overline{a}= \underline{d} \cdot {\underline{c}}$}} 
\put(36,3){\framebox(18,28){}}

\end{picture}
\caption{Generators and relations  arising from the double covering}\label{fgDblCovCrs}
\end{figure}

Theorem~\ref{thmBFKK1} follows from Theorems~\ref{thm:fundamentaldswitch},~\ref{thmBFKK2} and \ref{thmBFKK4}.


\begin{thebibliography}{88}

\bibitem{rBFKK}
A. Bartholomew, R. Fenn, S. Kamada, N. Kamada, {\it Doodles on surfaces I: An introduction to their basic properties}, 
arXiv:1612.08473v1. 


\bibitem{rCarter91A} 
J. S. Carter, {\it Classifying immersed curves\/}, 
Proc. Amer. Math. Soc. {111} (1991), no. 1, 281--287. 

\bibitem{rCarter91B} 
J. S. Carter, {\it Closed curves that never extend to proper maps of disks\/}, 
Proc. Amer. Math. Soc. {113} (1991), no. 3, 879--888.





\bibitem{rFR}
R. Fenn, C. Rourke, 
{\it Racks and links in codimension two\/}, 
J. Knot Theory Ramifications  {1} (1992), no. 4, 343--406. 

\bibitem{rFRS}
R. Fenn, C. Rourke and B. Sanderson, 
{\it The rack space\/}, 
Trans. Amer. Math. Soc. {359} (2007), no. 2, 701--740.  

\bibitem{rFT} 
R. Fenn and P. Taylor,  
{\it Introducing doodles}, 
Topology of low-dimensional manifolds, Lecture Notes in Math. 722 (1979), 37--43.





\bibitem{rIT} 
N. Ito and Y. Takimura, {\it $(1, 2)$ and weak $(1, 3)$ homotopies on knot projections}, 
J. Knot Theory Ramifications 22 (2013), no. 14, 1350085 (14 pages), and 
Addendum,  
J. Knot Theory Ramifications 23 (2014), no. 8, 1491001 (2 pages)








\bibitem{rkn2} 
N.~Kamada, 
{\it Converting virtual link diagrams to normal ones}, 
arXiv:1606.00667.  



\bibitem{rkk7} 
N.~Kamada and S.~Kamada,  
{\it Double coverings of twisted links\/},  
J. Knot Theory Ramifications, {25} (2016), no. 9, 1641011 (22 pages). 

 



\bibitem{rMK}  
M. Khovanov,  {\it Doodle groups\/}, Trans. Amer. Math. Soc. 349 (1997), no. 6, 2297--2315.




\bibitem{rMant}
V.I.~Manturov and D.P.~Ilyutko, 
{\it Virtual knots: The state of the art}, 
Series on Knots and Everything, 51. World Scientific Publishing Co. Pte. Ltd., Hackensack, NJ, 2013.  





\end{thebibliography}
 \end{document}